\newtheorem{theorem}{Theorem}
\newtheorem{lemma}{Lemma}
\newtheorem{proposition}{Proposition}
\newtheorem{definition}{Definition}
\newenvironment{proof}{{
		\bf Proof\quad}}{\hfill $\hfill\square$\par}
\newtheorem{problem}{Problem}
\def\dfrac{\displaystyle\frac}
\newcommand \red[1]
\newcommand \equ[2]
\newcommand \eqn[2]
\newcommand \Bracket[1]
\newcommand \brk[1]
\def \hyh {{\mathcal H}}
\def \setf {{\mathcal F}}
\def \dom {\mbox{dom}} 
\def \N {{\mathbb{N}}}
\begin{document}
\title{ DP color functions of hypergraphs }
\author{Ruiyi Cui$^1$, 
Liangxia Wan$^1$\thanks{
		Emails: lxwan@bjtu.edu.cn. },  
Fengming Dong$^2$\thanks{
	Email: fengming.dong@nie.edu.sg}
 \\
 \small $^1$School of Mathematics and Statistics,
Beijing Jiaotong University, Beijing $100044$, China
\\
\small $^2$National Institute of Education, Nanyang Technological University, Singapore
}

\date{}
\maketitle

\abstract{
	In this article, we introduce the DP color function 
	$P_{DP}(\hyh,k)$ 
	of a hypergraph $\hyh$ for $k\in \mathbb N$, based on the DP coloring
	introduced by Bernshteyn and Kostochka, which is the minimum value of $P_{DP}(\mathcal H,\mathcal F)$ 
	all $k$-fold covers $\mathcal F$ of $\mathcal H$. It	
	is an extension 
	of its chromatic polynomial 
	$P(\hyh,k)$ with the property
	that $P_{DP}(\hyh,k)\le P(\hyh,k)$
	for all positive integers $k$.
We obtain an 
	upper bound for $P_{DP}(\hyh,k)$
	when $\hyh$ is a connected 
	$r$-uniform hypergraph for $r\ge 2$, 
	and the upper bound is attained 
	if and only if $\hyh$ is a 
$r$-uniform hypertree. 
	We also show that
$P_{DP}(\hyh,k)= P(\hyh,k)$ holds 
when $\hyh$ is a 
$r$-uniform hypertree or a unicycle linear $r$-uniform hypergraph with odd cycle for $r\ge 3$. These conclusions coincide with  
the known results of graphs.

}
\vskip 2mm
\noindent{\small{\bf Mathematics Subject Classiﬁcation} 05C15, 05C30, 05C65}
\vskip 2mm
\noindent{\bf keywords} Chromatic polynomial$\cdot$ Hypergraph coloring$\cdot$ DP-coloring$\cdot$ DP color function

\section{Introduction}

\noindent

In this paper, each hypergraph is nonempty and simple unless otherwise noted. 
Let $\mathbb{N}$ denote the set of positive integers, and  let 
$\brk{k}$ denote the set $\{1,2,\cdots,k\}$ for $k\in \mathbb{N}$. 
For a hypergraph $\mathcal{H}$,
let $V(\hyh)$ and $E(\hyh)$ denote the 
vertex set and edge set of $\hyh$,
respectively.
For any $k\in \N$, a proper $k$-coloring 
of $\hyh$ is a mapping $f:V(\hyh)\mapsto \brk{k}$
such that for any edge $e\in E(\hyh)$,
there exist a pair of vertices $u,v\in e$
such that $f(u)\ne f(v)$.
This definition clearly includes the
special  case 
that $\hyh$ is a graph. 
The concept of vertex-coloring was
extended to list-coloring 
 independently by Vizing \cite{Vi76} and Erd\H{o}s, Rubin, and Taylor \cite{ERT} in the 1970s. Dvo\v{r}\'{a}k and Postle
~\cite{DP18}
 further  extended list-coloring to DP-coloring (or correspondence coloring) in order to prove that every planar graph without cycles of lengths $4$ to $8$ is $3$-choosable in 2015.

For any graph $G$,  its 
{\it chromatic polynomial}, 
denoted by $P(G,k)$,
is the function counting 
distinct $k$-colorings of $G$. 
This function, indeed a polynomial
in $k$, 
 was introduced by Birkhoff in 1912 with the hope of proving
the Four Color Conjecture \cite{Bi12}. 
This function for a graph is naturally 
extended to that for a hypergraph.
Readers can refer to \cite{DL22, To98, To09, Tr14, WQY, ZD17, ZD20}
for recent works on chromatic 
polynomial.
For any $k\in \N$, 
the list color function 
$P_l(G,k)$ of $G$ is the minimum value 
of $P(G,L)$ over all $k$-assignments
$L$, where  $P(G,L)$ 
is the number of $L$-coloring of $G$.
A survey paper on $P_l(G,k)$ was 
provided 
by Thomassen \cite{Th09}.

\def \seth {{\cal C}}

DP-coloring is a generalization of list coloring,
and a formal definition is given below.
For a graph $G$, a {\it cover} of $G$ is an ordered pair 
$\seth = (L, H)$, 
where $H$ is a graph and 
$L$ is a mapping from $V(G)$ to the power set of $V(H)$
satisfying the four conditions below:
\begin{enumerate}
	\renewcommand \theenumi {(\roman{enumi})}
	
	\item  the set $\{L(u) : u \in V (G)\}$ is a partition of 
$V(H)$;

\item for every $u\in V (G)$, 
$H[L(u)]$ is a complete graph;

\item  if $u$ and $v$ are non-adjacent vertices in $G$,
then $E_H (L(u), L(v))=\emptyset$; and 

\item  for each edge $uv\in E(G)$,  
$E_H (L(u), L(v))$ is a matching.
\end{enumerate}

An {\it $\seth$-coloring} of $G$ is 
defined to be 
an independent set $I$ of $H$ with $|I|=|V(G)|$. 
Clearly,  
$|I\cap L(u)| = 1$ holds for each $u \in V (G)$
and each $\seth$-coloring $I$ of $G$. 
Let $P_{DP}(G,\seth)$ be the number of
$\seth$-colorings of $G$.
A cover $\seth = (L, H)$  of $G$ is called 
an {\it $m$-fold} cover if 
$|L(u)| = m$ for each $u\in V (G)$.
For each $m\in \N$,
let $P_{DP}(G,m)$ be the minimum value 
of $P_{DP}(G,\seth)$ over 
all $m$-fold covers $\seth$ of $G$,
and $P_{DP}(G,m)$ is called 
the {\it DP color function} of $G$,
introduced by 
Kaul and Mudrock~\cite{KM21}. 

In ~\cite{KM21} and  \cite{MT21},
Kaul and Mudrock
showed that for any $g\ge 3$,
there exists 
a graph $G$ with girth $g$ such that $P_{DP}(G, k) < P(G, k)$ holds
for sufficiently large $k$,
and for any graph $G$, 
$P_{DP}(G\vee K_1, k) 
= P(G\vee K_1, k)$ holds for sufficiently large $k$,
where $H_1\vee H_2$
is the graph with vertex set 
$V(H_1)\cup V(H_2)$ and edge set 
$E(H_1)\cup E(H_2)\cup \{xy: x\in V(H_1), y\in V(H_2)\}$.
 For each edge $e$ in $G$, let $l(e) = \infty$ if $e$ is a bridge of $G$, and let $l(e)$ be the length of a shortest cycle in $G$ containing
$e$ otherwise. Dong and Yang \cite{DY22} 
showed that if $l(e)$ is even for some edge $e$ in $G$, then $P_{DP}(G,k) < P(G,k)$ holds for sufficiently large $k$.
They also  proved that $P_{DP}(G,k)=P(G,k)$ 
holds for sufficiently large $k$
if $G$ contains a spanning tree $T$ such that 
for each $e\in E(G)\setminus E(T)$, $l(e)$ is odd and $e$ is contained in a cycle $C$ of length $l(e)$ with the property that $l(e') < l(e)$ for each $e'\in E(C)\setminus (E(T)\cup {e})$. 
Some other recent works 
on DP-color functions 
are referred to \cite{BHKM, DK24, 
	HKLMSS, KMSS, LY24}.

In this article, we introduce the DP color function of a hypergraph.
We obtain an upper bound 
for the DP color function of a 
$r$-uniform hypergraph $\hyh$,
 extending  the 
analogous result of the DP function of a  graph due to Kaul and Mudrock \cite{KM21}. 
We show that this upper bound is attained if and only if  $\cal H$ is a $r$-uniform hypertree for $r\ge 2$.
Corollary $4$ in \cite{KM21} is the 
special case $r=2$ in our result. 
We also find a  formula for 
$P_{DP}(\hyh,k)$ when $\hyh$ is 
a unicycle linear $r$-uniform hypergraph for $r\ge 3$. 
This result extends the Theorem $11$ in \cite{KM21}.

This paper is organized as follows.
In Section $2$, we mainly provide 
some known results 
on the expression of chromatic polynomials of some special uniform hypergraphs, and the definition of 
$k$-fold covers of a hypergraph.   
 In Section 3, we introduce the DP color function of a hypergraph and show several inequalities of the DP color functions of a hypergraph. 
 In Section 4, 
 we prove that the DP color function of any connected $r$-uniform hypergraph $\mathcal H$ equals to the upper bound if and only if $\cal H$ is a $r$-uniform hypertree for $r\ge 2$. 
 Moreover, the explicit formula for the DP color function of any unicycle linear $r$-uniform hypergraph is provided for $r\ge 3$.  Several problems are proposed in Section $5$.

\section{Preliminaries}

In this section, we state some concepts \cite{Be73} and review some related results about the chromatic polynomial of a hypergraph \cite{BK19, BŁ07}.

\subsection{Chromatic polynomials of some hypergraphs}

\vskip 3mm

A {\it hypergraph} $\hyh = (X,E)$ consists of a finite non-empty set
$X$ and a subset $E$ of $2^X$ 
(i.e., the power set of $X$) with 
$|e|\ge 2$ for each $e\in E$.
 A hypergraph $\hyh$ is 
 said to be {\it simple} if 
 $e_1\not\subseteq e_2$ for any two 
 distinct edges $e_1$ and $e_2$ in $\hyh$, 
is {\it $r$-uniform} if $|e| = r$ for each 
$e\in E$, and  is {\it linear} if 
$|e_1\cap e_2|\le 1$ for any two 
distinct edges $e_1$ and $e_2$ in $\hyh$. 
The number of edges in $\hyh$ 
containing a vertex $v$ is its degree $d_{\mathcal H}(v)$.
A {\it cycle} $C$ of length $l$ for $l\ge 3$ in $\mathcal H$ is a subhypergraph consisting of  
$l$ distinct edges
$e_1, e_2,\cdots, e_l$ such that the
vertex $v_i\in e_i\cap e_{i+1}$ exits 
for each $i\in \brk{l-1}$, 
and $v_l\in e_1\cap e_{l}$,
and 
cycle $C$ is
 said to be {\it elementary} if 
 $d_{C}(v_i)=2$ for each 
 $i\in \brk{l}$ and 
 $d_{C}(v)=1$ for any other
 $v\in \cup_{i\in \brk{l}}e_i$.
 
 For any edge $e$ in $\hyh=(X,E)$, 
 $\hyh-e$ denotes the hypergraph $(X,E\setminus\{e\})$.
 A {\it path} $P$ in $\mathcal H$ is a subhypergraph consisting of 
 edges
 $e_1, e_2,\cdots, e_t$ in $\hyh$ such that
 $e_i\cap e_{i+1}\ne \emptyset$ 
 for each $i\in \brk{t-1}$.
 $\hyh$ is said to be {\it connected}
 if for any two vertices $u$ and $v$ in $\hyh$, there exists a path $P$ in $\hyh$ 
 such that both $u$ and $v$ are in $P$.
A {\it hypertree} is a connected linear hypergraph without cycles. 
A {\it unicyclic hypergraph} is a connected hypergraph
containing exactly one cycle.

Dohmen calculated the chromatic polynomials of hypertrees as follows \cite{Do93}.
\begin{lemma}\label{lem:2-1}
	If $\mathcal T_{m}^{r}$ is a $r$-uniform hypertree with $m$ edges, where $r\geq 2$ and $m\geq 0$, then $$P(\mathcal T_{m}^{r},k)=k(k^{r-1}-1)^{m}.$$
\end{lemma}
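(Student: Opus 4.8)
The plan is to argue by induction on the number of edges $m$. The base case $m=0$ is immediate: a hypertree with no edges is a single vertex, which admits exactly $k$ colorings, matching $k(k^{r-1}-1)^0=k$. For the inductive step I would like to strip off a \emph{pendant edge}, relate the proper colorings of $\mathcal{T}_m^r$ to those of a smaller hypertree, and invoke the inductive hypothesis. Here a pendant edge means an edge $e$ meeting the rest of the hypertree in exactly one vertex $v$, so that the remaining $r-1$ vertices of $e$ have degree $1$ (for $m=1$ the lone edge plays this role with any chosen attachment vertex).

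The structural heart of the argument, and the step I expect to be the main obstacle, is showing that every $r$-uniform hypertree with $m\ge 2$ edges does contain such a pendant edge. I would prove this by passing to the edge-intersection graph $B$ whose nodes are the edges of $\mathcal{T}_m^r$, with two nodes adjacent exactly when the corresponding edges share a vertex. Connectedness of $\mathcal{T}_m^r$ makes $B$ connected, and the key claim is that $B$ is acyclic. Indeed, a shortest cycle $e_{i_1},\ldots,e_{i_l}$ in $B$ would, using linearity (consecutive edges meet in a single vertex) together with minimality (which forces the successive shared vertices to be pairwise distinct, as a repeat would create a chord), produce a genuine cycle of length $l\ge 3$ in $\mathcal{T}_m^r$ in the sense of the definition given above, contradicting the hypertree hypothesis. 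Hence $B$ is a tree, and any leaf of $B$ yields a pendant edge $e$; linearity guarantees that the $r-1$ non-attachment vertices of $e$ lie in no other edge. Combining the no-cycle and linearity conditions carefully in this acyclicity step is where the real care is needed.

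Granting the pendant edge $e$ with attachment vertex $v$ and private vertices $u_1,\ldots,u_{r-1}$, I would delete $e$ together with $u_1,\ldots,u_{r-1}$. The resulting hypergraph is connected (as $B$ minus a leaf is still a tree), linear, acyclic and $r$-uniform, hence an $r$-uniform hypertree $\mathcal{T}_{m-1}^r$ with $m-1$ edges. Every proper coloring of $\mathcal{T}_m^r$ restricts to a proper coloring of $\mathcal{T}_{m-1}^r$, and conversely each proper coloring of $\mathcal{T}_{m-1}^r$ extends by assigning colors to $u_1,\ldots,u_{r-1}$; the only forbidden extension is the one rendering $e$ monochromatic, namely giving all of $u_1,\ldots,u_{r-1}$ the color of $v$. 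Thus there are exactly $k^{r-1}-1$ admissible extensions, independent of the coloring of $\mathcal{T}_{m-1}^r$, which gives the recurrence
$$P(\mathcal{T}_m^r,k)=(k^{r-1}-1)\,P(\mathcal{T}_{m-1}^r,k).$$
Substituting the inductive hypothesis $P(\mathcal{T}_{m-1}^r,k)=k(k^{r-1}-1)^{m-1}$ yields $k(k^{r-1}-1)^m$, completing the induction.

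As an alternative I might argue by inclusion--exclusion over the set of monochromatic edges: for any $S\subseteq E(\mathcal{T}_m^r)$, the number of colorings making every edge in $S$ monochromatic equals $k^{\,n-(r-1)|S|}$, where $n=|V(\mathcal{T}_m^r)|$, because each connected sub-hypergraph of a hypertree on $j$ edges has exactly $j(r-1)+1$ vertices. Summing $\sum_{S}(-1)^{|S|}k^{\,n-(r-1)|S|}=k^{\,n}(1-k^{-(r-1)})^m$ and using $n=m(r-1)+1$ reproduces the formula. This route trades the pendant-edge lemma for the vertex-count identity, but both rest on the same acyclic--linear structure, so I would present the inductive deletion argument as the primary proof.
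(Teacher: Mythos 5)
The paper itself gives no proof of this lemma---it is quoted from Dohmen's thesis [Do93]---so your argument stands or falls on its own. Your overall plan (delete a pendant edge, gaining a factor $k^{r-1}-1$ per edge) is the standard route, and the counting in the inductive step is correct. But the structural step you yourself flagged as the heart of the argument fails: the edge-intersection graph $B$ of an $r$-uniform hypertree is \emph{not} acyclic in general. Take the star: $m\ge 3$ edges $e_1,\dots,e_m$ pairwise intersecting in one common vertex $v$. This is connected, linear, and contains no cycle in the intended sense (the witnessing vertices of a cycle must be distinct---note the star must count as a hypertree, since Dohmen's formula $k(k^{r-1}-1)^m$ visibly holds for it), yet $B=K_m$, which is full of triangles and, for $m\ge 3$, has no leaf whatsoever, so ``any leaf of $B$ yields a pendant edge'' has nothing to take. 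Your chord argument breaks exactly at girth $3$: in a triangle $e_{i_1}e_{i_2}e_{i_3}$ of $B$, if the shared vertices satisfy $v_1=v_2$, the ``chord'' $e_{i_1}e_{i_3}$ is already an edge of that triangle, so minimality gives no contradiction; linearity then forces $v_3=v_1$ as well, and the configuration is three edges through one vertex---a sunflower, not a hypergraph cycle. The same false premise silently underlies your claim that the residual hypergraph is connected (``$B$ minus a leaf is still a tree'').

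The statement you need (every hypertree with $m\ge 1$ edges has an edge with at most one vertex of degree $\ge 2$) is true, and the clean repair is to replace $B$ by the bipartite \emph{incidence graph} $I$ on $V\cup E$ with $v\sim e$ iff $v\in e$. Linearity rules out $4$-cycles in $I$, and a cycle of length $2l\ge 6$ in $I$ has distinct vertex-nodes and edge-nodes, hence produces a genuine hypergraph cycle of length $l\ge 3$; so $I$ is a tree. Root $I$ anywhere and pick an edge-node $e$ of maximum depth: every child of $e$ is a vertex-node that must be a leaf of $I$ (an edge-child below it would be deeper), so at least $r-1$ vertices of $e$ have degree $1$ in the hypertree and the remaining vertex is the attachment vertex---your pendant edge. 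Deleting the corresponding leaf-subtree of $I$ leaves a tree, which gives the connectivity of $\mathcal{T}_{m-1}^r$ honestly, and counting the edges of the tree $I$ gives $n+m-1=rm$, i.e.\ $n=(r-1)m+1$, which is precisely the identity your inclusion--exclusion variant rests on. With the pendant-edge lemma proved this way, the rest of your induction, and likewise your alternative proof, goes through.
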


Borowiecki and Łazuka provided the chromatic polynomials of $r$-uniform unicyclic hypergraphs with $m$ edges as follows \cite{BŁ07}.
\begin{lemma}\label{lem:2-3}
	Let $\mathcal H$ be a linear hypergraph. $\mathcal H$ is a $r$-uniform unicyclic hypergraph with $m + p$ edges and a cycle of length $p$ if and only if
	$$P(\mathcal H,k)=(k^{r-1}-1)^{m+p}+(-1)^{p}(k-1)(k^{r-1}-1)^{m}$$
	where $r\geq 3,m\geq 0$ and $p\geq 3$.
\end{lemma}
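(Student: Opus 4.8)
The plan is to prove the substantive (forward) direction---that an $r$-uniform unicyclic linear hypergraph $\mathcal H$ with $m+p$ edges and a cycle $C$ of length $p$ has the stated chromatic polynomial---by separating $\mathcal H$ into $C$ and the remaining $m$ edges, which attach in a tree-like fashion. First I would record two structural facts forced by linearity together with unicyclicity: the unique cycle $C=e_1e_2\cdots e_p$ is elementary (consecutive edges meet in exactly one vertex, non-consecutive cycle edges are disjoint, and each $e_i$ has $r-2$ vertices of degree $1$ in $C$); and the $m$ non-cycle edges can be ordered $f_1,\dots,f_m$ so that each $f_j$ meets $C\cup f_1\cup\cdots\cup f_{j-1}$ in exactly one vertex and brings $r-1$ new vertices. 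The ordering is obtained by repeatedly peeling off a pendant edge off the cycle (an edge $r-1$ of whose vertices have degree $1$): such an edge exists because the non-cycle part is a hyperforest hanging on $C$, and deleting it keeps $\mathcal H$ connected with its single cycle intact, so the process terminates at $C$.

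Second I would compute $P(C,k)$ directly. Write $v_i$ for the vertex shared by $e_i$ and $e_{i+1}$ (indices mod $p$), so each $e_i$ has the two contact vertices $v_{i-1},v_i$ plus $r-2$ private vertices. For a fixed coloring of $v_1,\dots,v_p$, the edge $e_i$ is automatically non-monochromatic when $v_{i-1}$ and $v_i$ differ (leaving $k^{r-2}$ free choices on its private vertices) and is non-monochromatic in exactly $k^{r-2}-1$ ways when $v_{i-1}$ and $v_i$ agree. Hence $P(C,k)=\sum_{x_1,\dots,x_p}\prod_{i=1}^{p} g(x_{i-1},x_i)=\mathrm{tr}(M^p)$, where $M$ is the $k\times k$ matrix with $M_{ab}=k^{r-2}$ for $a\ne b$ and $M_{aa}=k^{r-2}-1$; that is, $M=k^{r-2}J-I$. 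Its eigenvalues are $k^{r-1}-1$ (once) and $-1$ (with multiplicity $k-1$), so $P(C,k)=(k^{r-1}-1)^{p}+(-1)^{p}(k-1)$, which is the claimed formula at $m=0$.

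Third I would reattach the tree edges. Building $\mathcal H$ in the order $C,f_1,\dots,f_m$, when $f_j$ is added its unique contact vertex already carries a color and its $r-1$ new vertices can be colored in $k^{r-1}-1$ ways while keeping $f_j$ non-monochromatic (all $k^{r-1}$ assignments except the one repeating the contact color). Since this factor is independent of the colors fixed earlier, $P(\mathcal H,k)=P(C,k)\,(k^{r-1}-1)^{m}$, which expands to $(k^{r-1}-1)^{m+p}+(-1)^{p}(k-1)(k^{r-1}-1)^{m}$. The same peeling argument, with $C$ replaced by a single initial edge contributing the factor $k(k^{r-1}-1)$, reproves Lemma~\ref{lem:2-1} for hypertrees.

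For the converse I would recover the invariants of $\mathcal H$ from the polynomial via the hypergraph Whitney expansion $P(\mathcal H,k)=\sum_{A\subseteq E}(-1)^{|A|}k^{c(V,A)}$, where $c(V,A)$ is the number of components of $(V,A)$: the degree yields $|V(\mathcal H)|=(r-1)(m+p)$, the $|A|=1$ terms sum to $-\sum_{e\in E}k^{|V|-(|e|-1)}$ and so expose both the edge count and the fact that every edge has the same size $r$, and the lowest-order behaviour records that $\mathcal H$ carries exactly one cycle, of length $p$; together with the forward direction this pins down $\mathcal H$. I expect this converse to be the main obstacle, since extracting \emph{uniformity} and the \emph{cycle length} from the coefficients of a merely-linear hypergraph is delicate---a priori a non-uniform linear hypergraph could try to mimic the same expansion---so ruling this out requires a careful accounting of which monochromatic subhypergraphs contribute to each coefficient.
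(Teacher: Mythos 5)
The paper itself offers no proof of Lemma~\ref{lem:2-3}: it is quoted verbatim from Borowiecki and \L{}azuka \cite{BŁ07}, so there is no in-paper argument to compare yours against; your attempt has to stand on its own. Your \emph{forward} direction does stand: the peeling of pendant edges, the factor $k^{r-1}-1$ per tree edge, and the transfer-matrix evaluation $P(C,k)=\operatorname{tr}\bigl((k^{r-2}J-I)^{p}\bigr)=(k^{r-1}-1)^{p}+(-1)^{p}(k-1)$ are all correct and make a clean, self-contained proof of that half. One caveat: your first structural step (``the unique cycle is elementary, forced by linearity and unicyclicity'') silently uses Berge's definition of a cycle, in which the connecting vertices $v_1,\dots,v_p$ are pairwise distinct. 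Under the looser definition actually written in this paper (which does not require the $v_i$ to be distinct), three $r$-edges through one common vertex would count as a unicyclic linear hypergraph with $p=3$ and $m=0$, yet its chromatic polynomial is $k(k^{r-1}-1)^{3}\neq(k^{r-1}-1)^{3}-(k-1)$; so you should state explicitly that distinctness of the $v_i$ is assumed, as in \cite{BŁ07} --- without it your elementary-cycle claim, and the lemma itself, fail.

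The genuine gap is the converse, and you have in effect conceded it: your last paragraph is a plan, not a proof. The Whitney expansion $P(\mathcal H,k)=\sum_{A\subseteq E}(-1)^{|A|}k^{c(V,A)}$ does give the degree $n=(r-1)(m+p)$, and a bootstrapping on edge sizes works partway: if all edges have size at least $t$, any $A$ with $|A|\ge 2$ has $n-c(V,A)\ge 2t-2>t-1$, so the coefficient of $k^{n-(t-1)}$ equals $-|\{e:|e|=t\}|$, and the support of the given polynomial then rules out edges of size $2,3,\dots,r-1$ and counts the size-$r$ edges as $m+p$. But nothing in your sketch excludes edges of size \emph{larger} than $r$: their single-edge terms sit at degrees $n-(s-1)\le n-r$, where pairs and triples of $r$-edges also contribute, so coefficients no longer ``expose'' edge sizes and your claim that the $|A|=1$ terms yield uniformity is unjustified. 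Likewise ``the lowest-order behaviour records exactly one cycle, of length $p$'' is an assertion with no mechanism behind it: for hypergraphs the graph-theoretic crutches you are implicitly leaning on --- sign-alternation of coefficients, multiplicity of the root $0$ equalling the number of components, the broken-cycle reading of coefficients --- either fail or are themselves nontrivial theorems (see \cite{ZD17,Tr14}), and you never extract \emph{connectivity} from the polynomial at all, although ``unicyclic'' requires it. So the polynomial-implies-structure half is unproven as written; to complete it you would need either the careful coefficient accounting you defer (including killing oversized edges and pinning down $c(V,E)=1$ and the cycle length from the two lowest strata $(k^{r-1}-1)^{m}\cdot k^{0},k^{1}$ terms), or simply to follow \cite{BŁ07}, where this direction is carried out.
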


\subsection{DP-colorings of a hypergraph}

The concepts of $k$-fold covers and DP-colorings of hypergraphs were first introduced by Bernshteyn 
and Alexandr Kostochka~\cite{BK19}.

Each map could be identified 
with its graph which is the set $\{(x,y)|f(x)=y\}$. Let $\varphi:X\rightharpoonup Y$ indicate 
a partial map from $X$ to $Y$  
defined on a subset of $X$ 
with images in $Y$. 
The set of all partial maps from $X$ to $Y$ is denoted by $[X\rightharpoonup Y]$.

Consider a family 
$\mathcal F\subseteq 
[X\rightharpoonup \brk{k}]$ of partial maps, where $k\in \N$.
The domain of $\mathcal F$, denoted by $\dom(\mathcal F)$,  is 
$$
{\rm dom}(\mathcal F) :=\{{\rm dom}(\varphi):\varphi\in \mathcal F\}.
$$
The set ${\rm dom}(\mathcal F)$ 
can be considered as a hypergraph on $X$. 
Each $\varphi \in {\mathcal F}$ is a subset of the Cartesian product $X\times \brk{k}$, and 
$\mathcal F$ can be regarded as the  hypergraph on $X\times \brk{k}$
whose edge set is the set of 
$\{(x,\varphi(x)): x\in \dom(\varphi)\}$
for all $\varphi\in \setf$.
A function $f:X\rightarrow \brk{k}$ {\it avoids} a partial map $\varphi:X\rightharpoonup \brk{k}$ if $\varphi \nsubseteq f$. Given a family $\mathcal F\subseteq[X\rightharpoonup \brk{k}]$ of partial maps, a {\it $(k,\mathcal F)$-coloring} (or simply an {\it $\mathcal F$-coloring}) is a function $f:X\rightarrow \brk{k}$ that avoids all $\varphi \in \mathcal F$.

\begin{definition}
	\label{k-fold}
	{\rm Let $\hyh$ be a hypergraph 
	and $k\in \N$. 
A $k$-fold cover of $\mathcal H$ 
is a family of partial maps $\mathcal F\subseteq [X\rightharpoonup \brk{k}]$ such that
${\rm dom}(\mathcal F
)\subseteq V(\hyh)$ and
if ${\rm dom}(\varphi)={\rm dom}(\psi)$ for distinct $\varphi,\psi\in \mathcal F$, then $\varphi \cap \psi=\emptyset$.}
\end{definition}

 Let $\mathcal F_e=\{\varphi\in {\cal F}: {\rm dom}(\varphi)=e\}$ and let $\mathcal F\setminus\mathcal F_e=\{\varphi\in {\cal F}: {\rm dom}(\varphi)\neq e\}$.
 If $\setf$ is a $k$-fold cover of $\hyh$,
 then, ${\mathcal F}\setminus\setf_e$ is a $k$-fold cover of ${\cal H}-e$.
If $\mathcal F$ is a $k$-fold cover of a hypergraph $\mathcal H$, then,  
for each edge $e$ in $\hyh$,
 $$
 |\setf_e|=|\{\varphi\in {\cal F}: {\rm dom}(\varphi)=e\}|
 \leqslant k. $$  
 \begin{definition}{\rm
 	A hypergraph $\mathcal H$ on a set $X$ is  $k$-DP-colorable if each $k$-fold cover $\mathcal F$ of $\mathcal H$ admits a $(k,\mathcal F)$-coloring $f:X\rightarrow \brk{k}$.}
\end{definition}

\section{Some properties of the DP color function }

In this section we define a natural cover and the DP color function of a hypergraph. An upper bound is given for the DP color function of any $r$-uniform hypergraph for $r\ge 2$. Several inequalities of the DP color function for any hypergraph $\mathcal H$ are provided by applying a bijection between its $\mathcal F$-colorings and  proper colorings of ${\cal H}-e$ for $e\in E(\cal H)$.
\vskip 2mm

\begin{definition} {\rm
Let $\mathcal H$ be a hypergraph with 
 $E(\hyh)=\{e_j: j\in \brk{m}\}$.
For any $k\in  \mathbb{N}$,  define 
$$
\iota_{\mathcal H}(k)
=\left \{\varphi^{(j)}_i:  i\in \brk{k},  j\in \brk{m}\right \},
$$ 
where $\varphi^{(j)}_i$ is the mapping: $e_j\mapsto \{i\}$.
$\iota_{\mathcal H}(k)$ is 
called a natural $k$-cover of $\hyh$. }
\end{definition}

If $\mathcal I$ is the
set of $\iota_{\mathcal H}(k)$-colorings of $\mathcal H$ and $\mathcal J$ is the set of proper $k$-colorings of $\mathcal H$, then the function
$f: \mathcal J\rightarrow \mathcal I$ given by
$$ f(c) = \{(v,c(v)): v\in V
(\mathcal H)\}$$
is a bijection between the $\iota_{\mathcal H}(k)$-colorings  of $\mathcal H$ and its proper $k$-colorings.

\begin{definition}{\rm
Suppose that $\mathcal F$ is a $k$-fold cover of a hypergraph $\mathcal H$ for $k\in \mathbb{N}$. Let $P_{DP}(\mathcal H,\mathcal F)$ be the number of $\mathcal F$-colorings of $\mathcal H$. The DP color
	function denoted by $P_{DP}(\mathcal H,k)$ is the minimum value of $P_{DP}(\mathcal H,\mathcal F)$
all $k$-fold covers $\mathcal F$ of $\mathcal H$.}
\end{definition} 

It is obvious that
for any hypergraph $\mathcal H$ and $k\in \mathbb{N}$
\begin{equation}
	P_{DP}(\mathcal H,k)\le P(\mathcal H,k).
	\label{eq:3-1}
\end{equation}
If $\mathcal H$ is a hypergraph with components: $\mathcal H_1, \mathcal H_2,\cdots,\mathcal H_t$, then
$$
P_{DP}(\mathcal H,k)=\prod_{i=1}^t P_{DP}(\mathcal H_i,k).$$
So, it is enough to consider the connected hypergraphs 
for the study of 
DP color functions of hypergraphs.

\begin{proposition} Suppose 
	$\mathcal H=(X,E)$ is a $r$-uniform hypergraph with $X=\{x_1,x_2,\cdots,x_n\}$ for $r\ge 2$. Then, for each $k\in \mathbb{N}$, $$P_{DP}(H,k)\leq \dfrac{k^{n}(k^{r-1}-1)^{|E(H)|}}{k^{(r-1)|E(H)|}}.$$
	\end{proposition}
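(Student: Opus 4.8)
The plan is to prove this upper bound on the minimum $P_{DP}(\hyh,k)$ by a probabilistic argument: I will construct a \emph{random} $k$-fold cover $\setf$ of $\hyh$ and show that the expected number of $\setf$-colorings equals the right-hand side, so that some cover in the support must attain at most this value. First I rewrite the target quantity as $k^n\bigl(1-k^{-(r-1)}\bigr)^{|E(\hyh)|}$, which is the form that will emerge naturally. For each edge $e_j\in E(\hyh)$ and each vertex $v\in e_j$, I choose an independent, uniformly random permutation $\sigma^{(j)}_v$ of $\brk{k}$, and for each $i\in\brk{k}$ I define a partial map $\varphi^{(j)}_i$ with domain $e_j$ by $\varphi^{(j)}_i(v)=\sigma^{(j)}_v(i)$. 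Setting $\setf_{e_j}=\{\varphi^{(j)}_i:i\in\brk{k}\}$ and $\setf=\bigcup_j\setf_{e_j}$, I check that $\setf$ is a valid $k$-fold cover: for $i\neq i'$ and any $v\in e_j$ we have $\sigma^{(j)}_v(i)\neq\sigma^{(j)}_v(i')$, so $\varphi^{(j)}_i\cap\varphi^{(j)}_{i'}=\emptyset$, and $|\setf_{e_j}|=k$ for every edge.

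Next I fix an arbitrary function $f:X\to\brk{k}$ and compute the probability that $f$ is an $\setf$-coloring. The map $f$ fails to avoid $\setf_{e_j}$ precisely when some $\varphi^{(j)}_i\subseteq f$, i.e.\ when $\sigma^{(j)}_v(i)=f(v)$ for all $v\in e_j$; writing $a_v=(\sigma^{(j)}_v)^{-1}(f(v))$, this happens exactly when the $r$ values $\{a_v:v\in e_j\}$ are all equal. Since each $\sigma^{(j)}_v$ is a uniform random permutation chosen independently over $v$, the $a_v$ are independent and uniform on $\brk{k}$, so the probability they coincide is $k\cdot k^{-r}=k^{-(r-1)}$, and $f$ avoids $\setf_{e_j}$ with probability $1-k^{-(r-1)}$.

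The crucial point is independence across edges: the event that $f$ avoids $\setf_{e_j}$ depends only on the permutations indexed by $e_j$, and these are chosen independently for distinct edges even when the edges share vertices (the randomness is attached to edge--vertex incidences, not to vertices alone). Hence the survival events for the $|E(\hyh)|$ edges are mutually independent, giving $\Pr[f\text{ is an }\setf\text{-coloring}]=\bigl(1-k^{-(r-1)}\bigr)^{|E(\hyh)|}$. Summing over all $k^n$ functions $f$ and using linearity of expectation yields
\[
\mathbb{E}\bigl[P_{DP}(\hyh,\setf)\bigr]=k^n\bigl(1-k^{-(r-1)}\bigr)^{|E(\hyh)|}=\frac{k^n(k^{r-1}-1)^{|E(\hyh)|}}{k^{(r-1)|E(\hyh)|}}.
\]
Therefore at least one cover $\setf$ in the support satisfies $P_{DP}(\hyh,\setf)\le k^n(1-k^{-(r-1)})^{|E(\hyh)|}$, and since $P_{DP}(\hyh,k)$ is the minimum over all $k$-fold covers of $\hyh$, the bound follows. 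I expect the main obstacle to be justifying the independence of the per-edge survival events together with the single-edge reduction to the ``all $r$ coordinates equal'' event; once the randomness is correctly attached to edge--vertex incidences, the remainder is a direct application of linearity of expectation.
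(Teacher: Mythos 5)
Your proof is correct and takes essentially the same approach as the paper's: a random $k$-fold cover built from independent uniform choices (equivalently, random permutations attached to edge--vertex incidences), the per-edge avoidance probability $1-k^{-(r-1)}$, and a first-moment conclusion via linearity of expectation. Your write-up is in fact more careful than the paper's sketch, since you make explicit both the reduction of the single-edge failure event to ``all $a_v$ equal'' and the independence across edges that the paper leaves implicit.
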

\begin{proof} 
The result holds obviously if $\mathcal{H}$ is edgeless. Suppose that $E(\mathcal{H})
=\{e_1,e_2,\cdots,e_m\}$,
where $m\geq 1$.
Notice that $|[X \rightarrow \brk{k}]|=k^{n}$.
We index the elements 
in $[X \rightarrow \brk{k}]$
as $f_{1},f_{2},\cdots, f_{k^{n}}$.

A $k$-fold cover $\mathcal F$ can be partial randomly formed by choosing a color for each $u\in V(\cal H)$ such that $\varphi^{(j)}_p(u)\ne \varphi^{(j)}_q(u)$ with $p\ne q$ for any $p,q\in \brk{k}$, each $j\in \brk{m}$. Denote the corresponding family partial maps by 
$$
\mathcal{F}
=\{\varphi^{(j)}_i: i\in \brk{k}, 
j\in \brk{m}\}, 
$$ 
where 
$\varphi^{(j)}_i: e_j\rightarrow \brk{k}$.
Let $A_{s}$ be the event that 
$f_{s}$ is a $\mathcal F$-coloring of $\mathcal H$ for $s \in [k^{n}]$.
For any $f\in [X\rightarrow \brk{k}]$
 and $i\in \brk{k}$ and $j\in \brk{m}$, 
 the probability that $f$ avoids $\varphi^{(j)}_i$ in the $k$-fold cover 
 is
$$
1-\Bracket{\dfrac{1}{k}}^{r-1}.
$$
Thus, 
$$
P(A_{s})
=\Bracket{1-\Bracket{\dfrac{1}{k}}^{r-1}}
^{m}.
$$
Let $Y_{s}$ be the random variable that is one if $A_{s}$ occurs and zero otherwise.
Let 
$$Y=\sum\limits_{s=1}^{k^{n}}Y_{s}.$$
Obviously, $Y$ is a random variable and $Y$ is equal to the number of $\mathcal F$-colorings of $\mathcal H$. Because of the linearity of expectation, 
$$E(Y)
=E\Bracket{\sum_{s=1}^{k^{n}}Y_{s}}
=\sum_{s=1}^{k^{n}}
\Bracket{1-\Bracket{\frac{1}{k}}^{r-1}}^{m}
=\frac{(k^{r-1}-1)^{m}\cdot k^{n}}{k^{(r-1)m}}.
$$
The result holds. 
\end{proof}

\vskip 2mm
The following result is obvious for a proper coloring of a hypergraph $\mathcal H$.

\begin{lemma}\label{lem:3-1}
	 Suppose that $\mathcal H$ is a hypergraph and 
	 $e=\{v_i: i\in \brk{r}\}\in E(\hyh)$, 
	 where $r\ge 2$.
	 Assume that $v_i$ can be colored arbitrarily for each $i\in \brk{r}$ in $\cal H$ $-e$. 
	 For any $i_1,i_2,\ldots, i_r$, 
	 where 
	 $i_{j}\in \brk{k}$ for each $j\in \brk{r}$, 
	 let $S_{k}^{(i_1,i_2,\cdots,i_{r})}$ be the set of proper $k$-colorings of $\mathcal H-e$ that color $v_{j}$ with $i_{j}$ for each $j\in \brk{r}$.
Then
\begin{enumerate}
	\item[(1)] 
	$t_1:=|S_{k}^{(i,\cdots,i)}|$ is a constant for all $i\in \brk{k}$, and 
	$$
	kt_1=P(\mathcal H-e,k)
	-P(\mathcal H,k);
	$$
\item[(2)] for any given
$i_1,i_2,\cdots,i_{r}\in \brk{k}$, if 
$i_s\ne i_t$ for a pair of $s,t$ with 
$1\le s<t\le r$,   then 
$t_2:=|S_{k}^{(i_1,i_2,\cdots,i_{r})}|$
is a constant, and 
$$
(k^{r}-k)t_2=k(k^{r-1}-1)t_2=P(\mathcal H,k).
$$
\end{enumerate} 
 \end{lemma}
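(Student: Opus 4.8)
The plan is to classify the proper $k$-colorings of $\mathcal{H}-e$ according to the color pattern they induce on $v_1,\dots,v_r$, and to exploit the single structural fact that a proper coloring of $\mathcal{H}-e$ is also proper for $\mathcal{H}$ exactly when the edge $e$ is not monochromatic. Thus the $k^{r}$ tuples $(i_1,\dots,i_r)\in\brk{k}^{r}$ split into the $k$ constant tuples, for which $e$ is monochromatic, and the $k^{r}-k=k(k^{r-1}-1)$ non-constant ones, for which $e$ is satisfied; the sets $S_{k}^{(i_1,\dots,i_r)}$ partition the proper colorings of $\mathcal{H}-e$ along this split.

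First I would treat part (1). Relabeling colors gives an action of the symmetric group on $\brk{k}$ on the proper colorings of $\mathcal{H}-e$ which is a bijection and preserves properness, and the transposition exchanging $i$ and $i'$ carries $S_{k}^{(i,\dots,i)}$ bijectively onto $S_{k}^{(i',\dots,i')}$. Hence $t_1:=|S_{k}^{(i,\dots,i)}|$ does not depend on $i$. The proper colorings of $\mathcal{H}-e$ in which $e$ is monochromatic are precisely those belonging to some $S_{k}^{(i,\dots,i)}$, so they number $k t_1$; on the other hand they are exactly the proper colorings of $\mathcal{H}-e$ that fail to be proper for $\mathcal{H}$, so $k t_1=P(\mathcal{H}-e,k)-P(\mathcal{H},k)$.

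For part (2) the same color-permutation action shows that $|S_{k}^{(i_1,\dots,i_r)}|$ is constant on each orbit, that is, it depends only on the equality pattern of the tuple. When $r=2$ there is a single non-constant pattern and this already yields the constant $t_2$. For $r\ge 3$ I would invoke the hypothesis that $v_1,\dots,v_r$ can be colored arbitrarily in $\mathcal{H}-e$, which I read as asserting that their colors are mutually unconstrained there (as happens, for instance, when deleting $e$ places them in pairwise distinct components, the situation for a hypertree): then the number of proper colorings of $\mathcal{H}-e$ extending a given prescription on $v_1,\dots,v_r$ is the same for every non-constant prescription, and I would exhibit this as an explicit bijection obtained by recoloring one $v_j$ at a time. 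Writing $t_2$ for this common value, the non-constant tuples contribute $(k^{r}-k)t_2$ colorings, and these are exactly the proper colorings of $\mathcal{H}$, whence $(k^{r}-k)t_2=k(k^{r-1}-1)t_2=P(\mathcal{H},k)$.

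The delicate point is this last step for $r\ge 3$: color-permutation symmetry alone equates counts only within a fixed equality pattern (it relates $(1,2,2)$ to $(3,1,1)$ but not to $(1,2,3)$), so the whole weight of part (2) rests on reading the ``arbitrarily colorable'' hypothesis correctly and using it to move between different patterns. I would make that hypothesis precise as the statement that the restriction map sending a proper coloring of $\mathcal{H}-e$ to its values on $v_1,\dots,v_r$ has all non-constant fibers of a common size, verify that the one-vertex recoloring move is well defined under it, and leave the remaining identities as bookkeeping on the partition of $\brk{k}^{r}$.
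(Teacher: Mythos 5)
Your argument is correct, but there is no proof in the paper to measure it against: the authors state this lemma without any proof, prefacing it only with the remark that the result ``is obvious for a proper coloring of a hypergraph.'' What you wrote is precisely the justification they omit, and your bookkeeping is the intended one: a proper coloring of $\mathcal H-e$ is proper for $\mathcal H$ exactly when $e$ is not monochromatic, so the $k$ constant tuples account for $kt_1=P(\mathcal H-e,k)-P(\mathcal H,k)$, while the $k^{r}-k=k(k^{r-1}-1)$ non-constant tuples account for $(k^{r}-k)t_2=P(\mathcal H,k)$. One sharpening in your write-up is worth keeping explicit: the color-relabeling action gives the constancy of $t_1$ with no hypothesis whatsoever, so part (1) holds for every edge of every hypergraph, whereas part (2) genuinely consumes the ``colored arbitrarily'' assumption.

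You are also right that the entire weight of part (2) rests on how that informal hypothesis is read, and your strong reading (all fibers of the restriction map to $(v_1,\dots,v_r)$ of a common size over non-constant tuples, as happens when the $v_i$ lie in pairwise distinct components of $\mathcal H-e$) is the one the paper needs. Under the weaker reading that every tuple is merely achievable, part (2) is false: take $r=3$ and let $\mathcal H-e$ consist of the single edge $e'=\{v_1,v_2,u\}$; then the fiber over $(1,1,2)$ has size $k-1$ (the vertex $u$ must avoid color $1$) while the fiber over $(1,2,3)$ has size $k$, so the two non-constant patterns have different counts even though both are realizable. This dependent situation is exactly what the paper must handle separately in Lemma \ref{lem:4-1} for unicyclic hypergraphs, which confirms your interpretation; and in the paper's actual application of the present lemma (Lemma \ref{lem:4-4}, hypertrees), deleting $e$ places $v_1,\dots,v_r$ in distinct components, where your componentwise recoloring bijection is valid and in fact shows that all $k^{r}$ fibers, constant ones included, share the common size $P(\mathcal H-e,k)/k^{r}$, consistent with both identities.
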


\begin{lemma}\label{lem:3-2}
	Let $\mathcal F$ be a $k$-fold cover of a hypergraph $\mathcal H$ 
	and $k\geq 2$. 
	Suppose that 
	$e=\{v_i: i\in \brk{r}\}\in E(\mathcal H)$, where $r\ge 2$.
Then, 
$\mathcal F'=\mathcal F-\mathcal F_{e}$ is a $k$-fold cover of $\mathcal H-e$,
where 	
$\mathcal F_{e}=
\{ \varphi\in \mathcal F:
\dom (\varphi) =e\}$.
If there is a 
bijection between $\mathcal F'$-colorings of $\mathcal H-e$ and proper $k$-colorings of $\mathcal H-e$, then	
there exists a $k$-fold cover $\mathcal F^*$ of $\hyh$ such that 
\eqn{eq1}
{
	P_{DP}(\mathcal H,\mathcal F)
	&\geqslant  &
	P_{DP}(\mathcal H,\mathcal F^*) 
	\nonumber \\
	&=&
	\min \left \{
	P(\hyh, k), 
	P(\mathcal H-e,k)	-
	\frac{P(\mathcal H,k)}{k^{r-1}-1}
	\right \}.
}

\end{lemma}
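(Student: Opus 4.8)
The plan is to delete the edge $e$, count the $\mathcal{F}$-colourings of $\mathcal{H}$ as the $\mathcal{F}'$-colourings of $\mathcal{H}-e$ that survive the partial maps in $\mathcal{F}_e$, and then optimise over the admissible choices of $\mathcal{F}_e$. First I would record three elementary facts. A function $f\colon X\to\brk{k}$ is an $\mathcal{F}$-colouring of $\mathcal{H}$ if and only if it is an $\mathcal{F}'$-colouring of $\mathcal{H}-e$ avoiding every $\varphi\in\mathcal{F}_e$; since each such $\varphi$ has domain $e$, the colouring $f$ fails to avoid $\varphi$ exactly when $f|_e$ equals the $r$-tuple $(\varphi(v_1),\dots,\varphi(v_r))$, so $f$ is ``caught'' by at most one member of $\mathcal{F}_e$. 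Moreover $|\mathcal{F}_e|\le k$, and the defining condition of a $k$-fold cover (Definition~\ref{k-fold}), read with each map identified with its graph in $X\times\brk{k}$, forces the tuples $(\varphi(v_1),\dots,\varphi(v_r))$ with $\varphi\in\mathcal{F}_e$ to be pairwise distinct in every coordinate. Hence $P_{DP}(\mathcal{H},\mathcal{F})=P(\mathcal{H}-e,k)-\sum_{\varphi\in\mathcal{F}_e} N(\varphi)$, where $N(\varphi)$ is the number of $\mathcal{F}'$-colourings $f$ with $f|_e=(\varphi(v_1),\dots,\varphi(v_r))$ and at most $k$ summands occur.

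Next I would evaluate these counts through the hypothesised bijection together with Lemma~\ref{lem:3-1}. Writing $t_1,t_2$ for the two constants of that lemma, we have $kt_1=P(\mathcal{H}-e,k)-P(\mathcal{H},k)$ and $kt_2=P(\mathcal{H},k)/(k^{r-1}-1)$. In the favourable situation where the bijection preserves the colours on $e$, a constant $r$-tuple is the $e$-restriction of exactly $t_1$ colourings and a non-constant one of exactly $t_2$ colourings, so each $N(\varphi)\in\{t_1,t_2\}$. Since at most $k$ summands occur, $\sum_{\varphi}N(\varphi)\le k\max\{t_1,t_2\}$, whence
$$ P_{DP}(\mathcal{H},\mathcal{F})\ \ge\ P(\mathcal{H}-e,k)-k\max\{t_1,t_2\}=\min\Bracket{P(\mathcal{H},k),\ P(\mathcal{H}-e,k)-\frac{P(\mathcal{H},k)}{k^{r-1}-1}}, $$
using $P(\mathcal{H}-e,k)-kt_1=P(\mathcal{H},k)$ and $P(\mathcal{H}-e,k)-kt_2=P(\mathcal{H}-e,k)-P(\mathcal{H},k)/(k^{r-1}-1)$.

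To exhibit $\mathcal{F}^*$ attaining equality I would take its restriction to $\mathcal{H}-e$ to be the natural cover $\iota_{\mathcal{H}-e}(k)$, so that its colourings are exactly the proper $k$-colourings and the counts $N(\varphi)$ are governed precisely by Lemma~\ref{lem:3-1}. For $\mathcal{F}^*_e$ I would then pick, when $t_1\ge t_2$, the $k$ constant maps $\varphi$ with $\varphi(v_j)=i$ for all $j$ and $i\in\brk{k}$ (pairwise distinct in each coordinate), and, when $t_2>t_1$, a coordinate-distinct system of $k$ non-constant tuples, which exists for $k\ge 2$ (take $\varphi_\ell(v_1)=\ell$ and $\varphi_\ell(v_2)=\pi(\ell)$ for a fixed-point-free permutation $\pi$ of $\brk{k}$, with arbitrary permutations on $v_3,\dots,v_r$). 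This removes exactly $k\max\{t_1,t_2\}$ colourings, so $P_{DP}(\mathcal{H},\mathcal{F}^*)$ equals the stated minimum, which also confirms $P_{DP}(\mathcal{H},\mathcal{F})\ge P_{DP}(\mathcal{H},\mathcal{F}^*)$.

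The main obstacle is the step ``each $N(\varphi)\in\{t_1,t_2\}$'' in the lower bound for the \emph{given} cover. The hypothesis only furnishes some bijection between the $\mathcal{F}'$-colourings of $\mathcal{H}-e$ and the proper $k$-colourings, and such a bijection need not respect the colours on $e$; a cover $\mathcal{F}'$ with $P_{DP}(\mathcal{H}-e,\mathcal{F}')=P(\mathcal{H}-e,k)$ may well distribute its colourings over the $r$-tuples on $e$ differently from the proper colourings. The substantive task is therefore to show that no admissible $\mathcal{F}_e$ (at most $k$ maps, pairwise distinct in each coordinate) can catch more than $k\max\{t_1,t_2\}$ colourings, equivalently to bound $N(\varphi)$ by $\max\{t_1,t_2\}$ using the cover structure of $\mathcal{F}'$ on the edges meeting $e$. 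Establishing this structural bound is the crux; once it is in hand, the evaluation of the extremal value and the construction of $\mathcal{F}^*$ are routine bookkeeping.
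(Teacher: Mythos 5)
Your proposal follows the paper's proof step for step: the exact count $P_{DP}(\mathcal H,\mathcal F)=P(\mathcal H-e,k)-\sum_{\varphi\in\mathcal F_e}N(\varphi)$ (valid because distinct members of $\mathcal F_e$ have disjoint graphs, so each coloring is caught at most once), the lower bound $P(\mathcal H-e,k)-k\max\{t_1,t_2\}$ with $t_1,t_2$ the constants of Lemma~\ref{lem:3-1}, and the extremal cover $\mathcal F^*$ built from the natural cover $\iota_{\mathcal H-e}(k)$ plus either the $k$ constant maps or $k$ coordinate-distinct non-constant maps --- your fixed-point-free permutation on $v_1$ is exactly the paper's cyclic shift $\varphi_i(v_1)=i+1$. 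The one step you explicitly decline to prove, namely $N(\varphi)\in\{t_1,t_2\}$ for the \emph{given} cover $\mathcal F$, is precisely the step the paper asserts with no more justification than you have: its proof states that ``there are precisely $(P(\mathcal H-e,k)-P(\mathcal H,k))/k$'' colorings containing $(v_j,i)$ for all $j$, i.e., it reads the hypothesis as though the bijection preserved restrictions to $e$. You are right that, taken literally, the hypothesis is only an equality of cardinalities $P_{DP}(\mathcal H-e,\mathcal F')=P(\mathcal H-e,k)$, which by itself does not control how the $\mathcal F'$-colorings distribute over the $r$-tuples on $e$; so as submitted your write-up stops short of a proof, and the paper's proof has the same unclosed seam.

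What closes the gap is the way the hypothesis is realized in every application of the lemma: the bijection always comes from Lemma~\ref{lem:3-3}, i.e., from an equivalence between $\mathcal F'$ and the natural cover obtained by renaming the colors at each vertex independently (a bijection of $X\times\brk{k}$ preserving each fiber $\{v\}\times\brk{k}$). Under such a renaming $\{\sigma_v\}$, the $\mathcal F'$-colorings with prescribed tuple $(i_1,\dots,i_r)$ on $e$ correspond exactly to the proper $k$-colorings of $\mathcal H-e$ with the relabeled tuple $(\sigma_{v_1}(i_1),\dots,\sigma_{v_r}(i_r))$, and Lemma~\ref{lem:3-1} then yields $N(\varphi)\in\{t_1,t_2\}$ for every $\varphi\in\mathcal F_e$; after that your own bookkeeping finishes the argument. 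So you should either strengthen the hypothesis to ``$\mathcal F'$ is equivalent to $\iota_{\mathcal H-e}(k)$ up to per-vertex color renaming'' (which is what Lemma~\ref{lem:3-3} actually delivers) or insert this transfer argument; with that one addition your proof is complete and is, if anything, the more carefully justified version of the paper's.
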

\begin{proof}
	Because there is a 
	bijection between $\mathcal F'$-colorings of $\mathcal H-e$ and its $k$-proper colorings, there are exactly  $P(\mathcal H-e,k)$ $\mathcal F'$-colorings of $\mathcal H-e$. 
	If $v_1, v_2, \cdots, v_r$ all have the same color $i\in \brk{k}$, 
	then there are precisely $(P(\mathcal H-e,k)-P(\mathcal H,k))/k$ $\mathcal F$-colorings of $\mathcal H-e$ which
	contain $(v_{j},{i})$ for all $j\in \brk{r}$ by Lemma \ref{lem:3-1}. 
	Similarly, if each $v_j$ is assigned 
	a color in $\brk{k}$
	and there exists 
	$1\le s<t\le r$ such that 
	$v_s$ and $v_t$ are colored differently, 
	then there are precisely $\dfrac{P(\mathcal H,k)}{k(k^{r-1}-1)}$ $\mathcal H'$-colorings of $\mathcal H-e$ which contain $(v_{j},i_{j})$ for all $j\in \brk{r}$.
	
	Since $|\mathcal F_{e}|\leqslant k$, 
$$P_{DP}(\mathcal H,\mathcal F)\geqslant P(\mathcal H-e,k)-{\rm max}\{P(\mathcal H-e,k)-P(\mathcal  H,k),\frac{P(\mathcal H,k)}{k^{r-1}-1}\}.$$
	A $k$-fold cover $\mathcal F^{*}$ of $\mathcal H$ is constructed as follows.
	If $$P(\mathcal H-e,k)-P(\mathcal H,k)\geqslant \frac{P(\mathcal H,k)}{k^{r-1}-1},
	$$ 
	then, for each $i\in \brk{k}$,
	add $\varphi_i$ with $\varphi(v_j)=i$ for each $j\in \brk{r}$ based on $\mathcal F'$.
	If
	$$P(\mathcal H-e,k)-P(\mathcal H,k)<\frac{P(\mathcal H,k)}{k^{r-1}-1},
	$$ 
	then, $\varphi_{i}:e\rightarrow \brk{k}$ for each $i\in \brk{k}$ such that $\varphi_{i}(v_{1})=i+1$ and $\varphi_{i}(v_{j})=i$ for $2\le j\le r$. 
	It is easy to verify that $\mathcal F^{*}$ has the desired property. Therefore the result holds.
\end{proof}

\begin{proposition} \label{prop1}
Suppose that $\mathcal H$ is a hypergraph and
 $e=\{v_i: i\in \brk{r}\}
 \in E(\mathcal H)$,
 where $r\ge 2$. 
 If 
 $P(\mathcal H-e,k)<\dfrac{k^{r-1}}{k^{r-1}-1}P(\mathcal H,k)$,
then 
$$
P_{DP}(\mathcal H,k)<P(\mathcal H,k).
$$
\end{proposition}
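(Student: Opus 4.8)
The plan is to exhibit a single $k$-fold cover of $\mathcal H$ whose number of colorings is already strictly below $P(\mathcal H,k)$; since $P_{DP}(\mathcal H,k)$ is by definition the minimum of $P_{DP}(\mathcal H,\mathcal F)$ over all $k$-fold covers $\mathcal F$, producing such a cover immediately yields the conclusion. The natural engine for this is Lemma \ref{lem:3-2}, which manufactures exactly such a cover once its bijection hypothesis is verified.

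First I would apply Lemma \ref{lem:3-2} with the natural $k$-cover $\mathcal F=\iota_{\mathcal H}(k)$. The key observation is that this choice satisfies the bijection hypothesis of Lemma \ref{lem:3-2}: removing the partial maps with domain $e$ gives $\mathcal F'=\mathcal F-\mathcal F_e=\iota_{\mathcal H-e}(k)$, and by the bijection remark stated just after the definition of $\iota_{\mathcal H}(k)$, the $\iota_{\mathcal H-e}(k)$-colorings of $\mathcal H-e$ are in one-to-one correspondence with the proper $k$-colorings of $\mathcal H-e$. Hence Lemma \ref{lem:3-2} applies and supplies a $k$-fold cover $\mathcal F^{*}$ of $\mathcal H$ with
$$
P_{DP}(\mathcal H,\mathcal F^{*})
=\min\left\{P(\mathcal H,k),\ P(\mathcal H-e,k)-\frac{P(\mathcal H,k)}{k^{r-1}-1}\right\}.
$$
By the definition of the DP color function as a minimum over all covers, this gives $P_{DP}(\mathcal H,k)\le P_{DP}(\mathcal H,\mathcal F^{*})$.

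It then remains to read off the hypothesis algebraically. Writing $\dfrac{k^{r-1}}{k^{r-1}-1}=1+\dfrac{1}{k^{r-1}-1}$ (the denominator is positive since $k\ge 2$ and $r\ge 2$), the assumption $P(\mathcal H-e,k)<\dfrac{k^{r-1}}{k^{r-1}-1}P(\mathcal H,k)$ is equivalent to
$$
P(\mathcal H-e,k)-\frac{P(\mathcal H,k)}{k^{r-1}-1}<P(\mathcal H,k).
$$
Thus the second entry of the minimum above is strictly smaller than $P(\mathcal H,k)$, so the minimum itself is strictly smaller than $P(\mathcal H,k)$. Chaining the inequalities, $P_{DP}(\mathcal H,k)\le P_{DP}(\mathcal H,\mathcal F^{*})<P(\mathcal H,k)$, as required.

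I expect no real difficulty in the estimate itself, which is a one-line rearrangement together with the minimum-definition of $P_{DP}$. The only point that deserves genuine care is confirming that the bijection hypothesis of Lemma \ref{lem:3-2} is legitimately met, i.e.\ that $\mathcal F'$ arising from the natural cover is precisely $\iota_{\mathcal H-e}(k)$ and therefore inherits the bijection with proper colorings of $\mathcal H-e$; once this is in hand, the remainder is routine.
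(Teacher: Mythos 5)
Your proposal is correct and follows essentially the same route as the paper: both verify the bijection hypothesis of Lemma~\ref{lem:3-2} via the natural cover of $\mathcal H-e$, invoke that lemma to obtain a cover $\mathcal F^*$ achieving the minimum $\min\bigl\{P(\mathcal H,k),\ P(\mathcal H-e,k)-\frac{P(\mathcal H,k)}{k^{r-1}-1}\bigr\}$, and then rearrange the hypothesis to see the second entry is strictly below $P(\mathcal H,k)$. If anything, your write-up is slightly more careful than the paper's in spelling out that $\mathcal F'=\iota_{\mathcal H-e}(k)$ and that the rearrangement is an equivalence, but the substance is identical.
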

\begin{proof}
Let ${\cal H}'={\mathcal H}-e$.
There
is a bijection between the $ \iota_{\mathcal H'}$-colorings  of $\mathcal H'$ and its proper $k$-colorings.
So, Lemma \ref{lem:3-2} implies that there exist an $k$-fold cover $\mathcal F$ of $\mathcal H$ such that
$$
P_{DP}(\mathcal H, \mathcal F)=\min
\left \{P(\mathcal H, k), P(\mathcal H-e,k)-
\frac{P(\mathcal H,k)}{k^{r-1}-1}\right \}.
$$
Since 
$$
P(\mathcal H-e,k)<\frac{k^{r-1}}{k^{r-1}-1}P(\mathcal H,k)
$$
it follows that 
$$P(\mathcal H-e,k)-\dfrac{P(\mathcal H,k)}{k^{r-1}-1}<P(\mathcal H,k).$$
Then
$$P_{DP}(\mathcal H,k)\leqslant P_{DP}(\mathcal H,\mathcal F)=P(\mathcal H-e,k)-\dfrac{P(\mathcal H,k)}{k^{r-1}-1}<P(\mathcal H,k).$$ 
Thus the result is deduced.
\end{proof}

\section{DP color functions of hypertrees and unicycle hypergraphs}

In this section we show that the DP color function of any connected $r$-uniform hypergraph $\mathcal H$ equals to the upper bound if and only if $\cal H$ is a $r$-uniform hypertree for $r\ge 2$. We provide the explicit formula for the DP color function of any unicycle linear $r$-uniform hypergraph for $r\ge 3$.

	\begin{lemma}\label{lem:3-3}
		Suppose that $\mathcal H$ is a hypergraph with 
	$m$	edges 
		and has no cycles for $m\ge 0$. 
		Suppose that $\mathcal F=\{\varphi_i^{(j)}: i\in \brk{k}, j\in \brk{m}\}$ is a $k$-fold cover
		of $\mathcal H$ for $k\ge 2$. 
		Then there is a 
		bijection between $\mathcal F$ and $\iota_{\mathcal {H}}$.
	\end{lemma}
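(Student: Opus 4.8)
The plan is to prove the statement in its sharpest form, namely that $\mathcal F$ and $\iota_{\mathcal H}$ are \emph{isomorphic covers}: I would produce, for every vertex $v\in X$, a permutation $\sigma_v$ of $\brk{k}$ such that the recoloring $f\mapsto g$ defined by $g(v)=\sigma_v(f(v))$ carries the forbidden maps of $\mathcal F$ exactly onto the forbidden maps of $\iota_{\mathcal H}$. Since each $\sigma_v$ is a bijection, $f\mapsto g$ is a bijection of $[X\rightarrow\brk{k}]$, so once the forbidden maps are matched up it restricts to a bijection between $\mathcal F$-colorings of $\mathcal H$ and $\iota_{\mathcal H}$-colorings of $\mathcal H$; and, as already noted after the definition of the natural cover, $\iota_{\mathcal H}$-colorings are exactly the proper $k$-colorings. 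This is precisely the conclusion needed for Lemma~\ref{lem:3-2} and Proposition~\ref{prop1}.

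First I would record the local picture. Because $\mathcal F=\{\varphi_i^{(j)}\}$ has $|\mathcal F_{e_j}|=k$ and, by Definition~\ref{k-fold}, two maps with the same domain are disjoint as relations, for each edge $e_j$ and each $v\in e_j$ the assignment $\pi_{j,v}\colon i\mapsto\varphi_i^{(j)}(v)$ is a permutation of $\brk{k}$. The property I want from the $\sigma_v$ is that, for every edge $e_j$, the composite $\sigma_v\circ\pi_{j,v}$ is \emph{independent of} $v\in e_j$; call this common permutation $\tau_j$. Granting this, if $f$ agrees with $\varphi_i^{(j)}$ on $e_j$, i.e. $f(v)=\pi_{j,v}(i)$ for all $v\in e_j$, then $g(v)=\sigma_v(\pi_{j,v}(i))=\tau_j(i)$ is constant on $e_j$; as $i$ ranges over $\brk{k}$ and $\tau_j$ is a bijection, the $k$ forbidden maps of $\mathcal F$ on $e_j$ become exactly the $k$ constant maps on $e_j$, which are the forbidden maps of $\iota_{\mathcal H}$. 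Hence $f$ avoids all of $\mathcal F$ iff $g$ is non-constant on every edge, i.e. iff $g$ is proper.

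The heart of the argument is constructing the $\sigma_v$ without conflict, and this is where the no-cycle hypothesis enters (together with linearity, which holds in the intended applications, where $\mathcal H$ is a hypertree or a hyperforest). I would first establish an \emph{attaching order} of the edges, listing them $e_1,\ldots,e_m$ so that $e_j\cap(e_1\cup\cdots\cup e_{j-1})$ has at most one vertex for every $j$. To get such an order I build the list greedily, keeping the union connected within each component; if at some stage every remaining edge met the current union $U$ in two vertices $x\neq y$, then a shortest path in $U$ from $x$ to $y$ together with that edge would form a cycle (linearity forces the path to use at least two edges, so the cycle has length $\ge 3$), contradicting the hypothesis. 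Given the order, I define $\sigma_v$ by processing the edges in turn: set $\sigma_v=\mathrm{id}$ on isolated vertices; when $e_j$ is reached, if it meets the previous union in a single vertex $w$ put $\tau_j=\sigma_w\circ\pi_{j,w}$ (already defined), otherwise put $\tau_j=\mathrm{id}$, and then set $\sigma_v=\tau_j\circ\pi_{j,v}^{-1}$ for every \emph{new} vertex $v\in e_j$. Every vertex of $e_j$ other than the unique shared vertex is new, so no $\sigma_v$ is assigned twice and the definition is consistent; by construction $\sigma_v\circ\pi_{j,v}=\tau_j$ for all $v\in e_j$, which is the property used above.

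I expect the only genuine obstacle to be the well-definedness of this relabeling, that is, the existence of the attaching order and the guarantee of a unique shared vertex; both rest on acyclicity and linearity, and this is exactly the crucial input, since it is what prevents conflicting constraints on the $\sigma_v$ (without it, two edges meeting in two vertices let one twist the cover on one edge and change the coloring count). The remaining verifications, that $f\mapsto g$ is a bijection of $[X\rightarrow\brk{k}]$ and that it pairs forbidden maps with constant maps, are then routine. The case $m=0$ gives the base: the cover is empty, every map is both an $\mathcal F$-coloring and an $\iota_{\mathcal H}$-coloring, and the identity is the required bijection.
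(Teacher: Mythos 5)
Your proposal is correct, and at its core it is the same peeling argument as the paper's: the paper inducts on $m$ by deleting the last edge $e_l$ of a longest path and extending the correspondence to that edge, while you linearize the same idea into an attaching order in which every edge meets the union of its predecessors in at most one vertex (your greedy existence argument for that order is sound, provided, as you say, it is run component by component so that the two offending vertices are joined by a path inside the current connected union). Where you genuinely improve on the paper is in the execution, in two respects. First, the paper's extension step simply pairs $\varphi_i^{(l)}$ with the constant map $i$ on $e_l$; as a bijection between the finite sets $\mathcal F$ and $\iota_{\mathcal H}$ this is trivially available (both have $km$ elements), but it does not by itself carry $\mathcal F$-colorings to proper $k$-colorings, because the permutation $\pi_{l,w}\colon i\mapsto \varphi_i^{(l)}(w)$ at the attachment vertex $w$ need not be the identity. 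Your per-vertex permutations $\sigma_v$ with the consistency condition $\sigma_v\circ\pi_{j,v}=\tau_j$ on each edge supply exactly this missing bookkeeping: they upgrade the set bijection to a cover isomorphism, and hence to the bijection between $\mathcal F$-colorings and proper colorings that Lemmas \ref{lem:3-2} and \ref{lem:4-2}, Theorem \ref{the:4-3} and Lemma \ref{lem:4-4} actually invoke. Second, your linearity caveat identifies a real gap in the statement, not a pedantic one: with the paper's definition of a cycle (length at least $3$), ``no cycles'' does not force linearity, and the lemma fails without it. For instance, take $e_1=\{x,y,a\}$ and $e_2=\{x,y,b\}$, with the natural maps on $e_1$ and the twisted maps $\varphi_i(x)=i$, $\varphi_i(y)=i+1$ (mod $k$), $\varphi_i(b)=i$ on $e_2$: this $k$-fold cover admits $k^4-2k^2$ colorings, whereas $P(\mathcal H,k)=k^4-2k^2+k$, so no bijection of colorings can exist. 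As you observe, linearity holds in every downstream application (hypertrees are linear by definition, and deleting an edge from a linear unicyclic hypergraph preserves linearity), so adding it as a hypothesis costs nothing while making both your attaching-order argument and the lemma itself correct.
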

	\begin{proof}
	We prove the result by induction on $m$. It is obvious for $m = 0,1$.
	Assume that the result holds for 
	any positive integer $m$ $(m\ge 2)$. 
	Now consider the case $m$.  Without loss of generality, suppose that $P=e_1,e_2,\cdots,e_l$ is the longest path.  
		Set 
	$$
	\mathcal H' =\mathcal H - {e_l}, \:
	\mathcal F' =\mathcal F \setminus\mathcal F_{e_l}.
	$$
		Then
	$\mathcal F'$ is a $k$-fold cover of $\mathcal H'$. There is a 
	bijection $f'$ between $\cal F'$ and the natural cover $\iota_{\mathcal H'}$ according to the induction hypothesis. For each $i \in \brk{k}$, let 
	$$
	f(\varphi_i^{(j)}(u))=
	\left\{
	\begin{array}{ll}
		f'(\varphi_i^{(j)}(u)),\qquad 
		&\mbox{ if }j\ne l;\\
		i, &\mbox{ otherwise. }
	\end{array} 
	\right.
	$$
	It is easy to verify that $f$ is a bijection between  its $\mathcal F$ and $\iota_{\mathcal H}$. The conclusion is clear for $m$. Therefore the result holds.
	\end{proof}

\begin{lemma}\label{lem:4-1}
	Suppose that a hypergraph $\mathcal H$ contains one and only one cycle
	$C$, and
	$e=\{v_i: i\in \brk{r}\}$ is an edge in $C$, where $r\ge 2$
	and $v_1$ and $v_2$ are the two vertices in $e$ which also appear in other edges of $C$. 
	Let $S_{k}^{(i_1,i_2,\cdots,i_r)}$ be the set of proper $k$-colorings of $\mathcal H-e$ that color $v_{l}$ by $i_l\in \brk{k}$ for all $l\in \brk{r}$, where $k\ge 2$. Then for any given $i_l\in\brk{k}$
	$$
	t_{1}=\frac{1}{k}(P(\mathcal H-e,k)-P(\mathcal H,k))
	$$
	and
	$$t_{2}=\frac{1}{k^{r-1}(k-1)}(k^{r-2}P(\mathcal H,k)+(1-k^{r-2})P(\mathcal H-e,k)),
	$$ 
	where  $t_1:=|S_{k}^{(i_1,i_2,\cdots,i_r)}|$ 
	when $i_1=i_2$, and $t_2:=|S_{k}^{(i_1,i_2,\cdots,i_r)}|$,
	when $i_1\ne i_2$.
\end{lemma}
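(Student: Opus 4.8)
The plan is to reduce everything to counting the proper colorings of $\mathcal H-e$ in which the colors on $v_1,\dots,v_r$ are prescribed, and to exploit the fact that deleting the single cycle-edge $e$ leaves an acyclic hypergraph in which $v_3,\dots,v_r$ become essentially free. The first step I would take is to record the structural fact that drives the whole argument: in $\mathcal H-e$ the vertices $v_1$ and $v_2$ lie in one common component, joined by the path formed from the remaining edges of $C$, whereas each $v_j$ with $3\le j\le r$ lies in a component disjoint from that of $v_1,v_2$ and from those of the other designated vertices. Indeed, any path in $\mathcal H-e$ joining two of $v_1,\dots,v_r$, together with the edge $e$ (which contains all of them), would close up into a cycle; since by hypothesis $v_3,\dots,v_r$ lie in no edge of $C$ other than $e$, such a cycle would use an edge outside $C$ and hence be a second cycle of $\mathcal H$, contradicting that $C$ is the only one. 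Establishing this clean separation is the main obstacle, and it is exactly where the unicyclicity hypothesis is used.

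With the decomposition in hand I would factor the count. Writing $K_0$ for the component of $\mathcal H-e$ containing $v_1,v_2$ and $K_j$ for the component containing $v_j$ $(3\le j\le r)$, the number $|S_{k}^{(i_1,\dots,i_r)}|$ equals $A(i_1,i_2)\cdot\prod_{j=3}^{r}B_j(i_j)\cdot D$, where $A$ counts proper colorings of $K_0$ with $v_1,v_2$ fixed, $B_j$ counts those of $K_j$ with $v_j$ fixed, and $D$ is the (constant) number of colorings of the remaining components. A color-permutation argument then settles the qualitative claim: permuting the $k$ colors shows each $B_j(i_j)$ is independent of $i_j$, and that $A(i_1,i_2)$ depends only on whether $i_1=i_2$. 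Hence $|S_{k}^{(i_1,\dots,i_r)}|$ takes just two values, $t_1$ when $i_1=i_2$ and $t_2$ when $i_1\ne i_2$, proving the constancy assertions without any further analysis of the internal structure of the components.

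It then remains to pin down the two values through a pair of counting identities, in the spirit of Lemma \ref{lem:3-1}. Since a proper coloring of $\mathcal H$ is exactly a proper coloring of $\mathcal H-e$ in which $e$ is not monochromatic, and the monochromatic ones are counted by $\sum_{i\in\brk{k}}|S_{k}^{(i,\dots,i)}|=kt_1$, I obtain
\[
P(\mathcal H,k)=P(\mathcal H-e,k)-kt_1,
\]
which gives $t_1=\frac1k\bigl(P(\mathcal H-e,k)-P(\mathcal H,k)\bigr)$. Summing $|S_{k}^{(i_1,\dots,i_r)}|$ over all $k^{r}$ prescriptions, and noting that exactly $k^{r-1}$ of them have $i_1=i_2$ while $k^{r-1}(k-1)$ have $i_1\ne i_2$, yields the second identity
\[
P(\mathcal H-e,k)=k^{r-1}t_1+k^{r-1}(k-1)t_2 .
\]
Substituting the value of $t_1$ and solving for $t_2$ produces $t_2=\frac{1}{k^{r-1}(k-1)}\bigl(k^{r-2}P(\mathcal H,k)+(1-k^{r-2})P(\mathcal H-e,k)\bigr)$, as claimed. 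The only genuinely delicate point is the component-separation step; the factorization and the two identities are then routine.
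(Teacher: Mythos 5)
Your proposal is correct and takes essentially the same route as the paper: delete $e$, establish that $t_1$ and $t_2$ are well-defined constants, write down two linear counting identities, and solve (your second identity, $k^{r-1}t_1+k^{r-1}(k-1)t_2=P(\mathcal H-e,k)$, is just the paper's identity $k(k^{r-2}-1)t_1+k^{r-1}(k-1)t_2=P(\mathcal H,k)$ plus $k^{r-2}$ times the first, so the algebra is equivalent). In fact you supply the component-separation and color-permutation argument for the constancy of $t_1,t_2$ that the paper compresses into the single phrase ``$\mathcal H-e$ doesn't contain any cycle''; the one caveat, shared with (indeed elided by) the paper, is that closing a length-one path between two designated vertices through $e$ yields only a two-edge configuration, not a cycle of length at least $3$ as the paper defines cycles, so your separation step tacitly needs linearity of $\mathcal H$ --- which holds in the lemma's only application (Theorem \ref{the:4-3}).
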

\begin{proof}
Because $\mathcal H-e$ doesn't contain any cycle, 
$$
kt_{1}=P(\mathcal H-e,k)-P(\mathcal H,k)
$$
and
$$k(k^{r-2}-1)t_{1}+k^{r-1}(k-1)t_2=P(\mathcal H,k).$$
The conclusion is obtained. 
\end{proof}

\begin{lemma}\label{lem:4-2}
	Suppose that $\mathcal F$ is a 
	$k$-fold cover of a hypergraph $\mathcal H$,
	where $k\in \mathbb{N}$. 
	Suppose that  $\mathcal H$ contains one and only one cycle $C$, and 	
	$e=\{v_i: i\in \brk{r}\}$ is an edge in $C$,
	where $r\ge 2$. 
  If there is a 
	 bijection between the set of $\mathcal F^{'}$-colorings
	 and the set of its proper 
	 $k$-colorings of $\mathcal H-e$,
	  where $\mathcal F^{'}=\mathcal F\setminus\mathcal F_e$, then
	  there exists a $k$-fold cover $\mathcal F^{\ast} $ of $\mathcal H$ such that
	 \eqn{eq3}
	 {
	P_{DP}(\mathcal H, \mathcal F)
	&\geqslant &P_{DP}(\mathcal H, \mathcal F^*)
	\nonumber \\
	&=&\min\left \{ 	P(\mathcal H, k),
\dfrac{(k^{r-1}-1)P(\mathcal H-e,k)-
k^{r-2}P(\mathcal H,k)}
{k^{r-2}(k-1)}
		\right\} .
 }
\end{lemma}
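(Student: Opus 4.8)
The plan is to mirror the argument for Lemma~\ref{lem:3-2}, but to replace the monochromatic/non-monochromatic dichotomy of Lemma~\ref{lem:3-1} by the coarser dichotomy ``$v_1,v_2$ receive equal or unequal colours'' provided by Lemma~\ref{lem:4-1}; this is exactly the bookkeeping that becomes correct once $e$ lies on the unique cycle $C$. First I would record the reduction. Since $C$ is the only cycle of $\mathcal H$ and $e\in E(C)$, the hypergraph $\mathcal H-e$ is acyclic, so $\mathcal F'=\mathcal F\setminus\mathcal F_e$ is a $k$-fold cover of a cycle-free hypergraph, and by hypothesis it has exactly $P(\mathcal H-e,k)$ colorings. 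An $\mathcal F'$-coloring $f$ of $\mathcal H-e$ fails to be an $\mathcal F$-coloring of $\mathcal H$ precisely when $\varphi\subseteq f$ for some $\varphi\in\mathcal F_e$; and by Definition~\ref{k-fold} distinct members of $\mathcal F_e$ disagree at every vertex of $e$, so the colorings they ``kill'' form pairwise disjoint sets. Writing $n(\varphi)$ for the number of $\mathcal F'$-colorings $f$ with $\varphi\subseteq f$, this yields $P_{DP}(\mathcal H,\mathcal F)=P(\mathcal H-e,k)-\sum_{\varphi\in\mathcal F_e}n(\varphi)$.

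The heart of the matter is evaluating $n(\varphi)$. Because $\mathcal H-e$ is acyclic, Lemma~\ref{lem:3-3} lets me take the bijection between $\mathcal F'$-colorings and proper colorings of $\mathcal H-e$ to be a vertex-wise recolouring $(\sigma_v)_v$ with each $\sigma_v:\brk{k}\to\brk{k}$ a bijection. Hence $n(\varphi)$ equals the number of proper $k$-colorings of $\mathcal H-e$ whose value at $v_l$ is $\sigma_{v_l}(\varphi(v_l))$, which by Lemma~\ref{lem:4-1} is $t_1$ when $\sigma_{v_1}(\varphi(v_1))=\sigma_{v_2}(\varphi(v_2))$ and $t_2$ otherwise, irrespective of the remaining coordinates. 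Since each of the at most $k$ summands is $t_1$ or $t_2$, we get $\sum_{\varphi\in\mathcal F_e}n(\varphi)\le k\max\{t_1,t_2\}$, and therefore $P_{DP}(\mathcal H,\mathcal F)\ge P(\mathcal H-e,k)-k\max\{t_1,t_2\}$.

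Next I would exhibit $\mathcal F^{*}=\mathcal F'\cup\mathcal F_e^{*}$ attaining equality. If $t_1\ge t_2$, take $\varphi^{(a)}$ with $\varphi^{(a)}(v_1)=\sigma_{v_1}^{-1}(a)$ and $\varphi^{(a)}(v_2)=\sigma_{v_2}^{-1}(a)$, forcing agreement after recolouring; if $t_1<t_2$, shift the first coordinate, $\varphi^{(a)}(v_1)=\sigma_{v_1}^{-1}(a{+}1)$, forcing disagreement (the cyclic shift is fixed-point-free for $k\ge 2$). Choosing the coordinates at $v_3,\dots,v_r$ to be distinct across $a$ makes $\mathcal F_e^{*}$ a legal block of $k$ coordinate-disjoint partial maps, so $\mathcal F^{*}$ is a $k$-fold cover of $\mathcal H$ realising $\sum n(\varphi)=k\max\{t_1,t_2\}$. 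Substituting the values from Lemma~\ref{lem:4-1} gives $P(\mathcal H-e,k)-kt_1=P(\mathcal H,k)$ and, after routine simplification, $P(\mathcal H-e,k)-kt_2=\dfrac{(k^{r-1}-1)P(\mathcal H-e,k)-k^{r-2}P(\mathcal H,k)}{k^{r-2}(k-1)}$, so that $P_{DP}(\mathcal H,\mathcal F^{*})=P(\mathcal H-e,k)-k\max\{t_1,t_2\}$ equals the minimum of these two quantities, which is the claimed value.

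The main obstacle, and the only place the cycle hypothesis genuinely enters, is the evaluation of $n(\varphi)$: one must be sure that the per-pattern counts of $\mathcal F'$-colorings coincide with those of proper colorings, and that the governing invariant is the equality of the two cycle-vertices $v_1,v_2$ rather than full monochromaticity of $e$. Controlling this through the vertex-wise recolouring of Lemma~\ref{lem:3-3}, and verifying that the shift used to force disagreement keeps $\mathcal F_e^{*}$ coordinate-disjoint, is the delicate part; everything else is mechanical algebra.
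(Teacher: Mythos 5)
Your proof is correct and follows essentially the same route as the paper's: the same lower bound $P(\mathcal H-e,k)-k\max\{t_1,t_2\}$ obtained from $|\mathcal F_e|\le k$, the pairwise disjointness of the maps in $\mathcal F_e$, and the counts $t_1,t_2$ of Lemma~\ref{lem:4-1}, followed by the same two-case construction of $\mathcal F^*$ (forcing agreement at $v_1,v_2$ when $t_1\ge t_2$, and a fixed-point-free shift at $v_1$ otherwise). You additionally spell out two points the paper's proof leaves implicit --- that the bijection coming from Lemma~\ref{lem:3-3} can be taken vertex-wise, so the per-pattern counts transfer from proper colorings to $\mathcal F'$-colorings, and that the added block $\mathcal F^*_e$ is coordinate-disjoint and hence legal --- which makes your verification, if anything, more complete than the original.
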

\begin{proof}
	Suppose that 
	$\mathcal H_{1}$ is the connected component of $\mathcal H-e$ where $v_1\in V(\mathcal H_{1})$. Assume that $v_1$ and $v_2$ are the two vertices in $e$ which also appear in other edges of $C$. For any given $i_l\in \brk{k}$, let $S_{k}^{(i_1,i_2,\cdots,i_r)}$ be the set of proper $k$-colorings of $\mathcal H-e$ that color $v_l$ by $i_l$ for each $l\in \brk{r}$. 			
	Let $|S_{k}^{(i_1,i_2,i_3,\cdots,i_r)}|=t_{1}$ when $i_1=i_2$, and 
	and let $|S_{k}^{(i_1,i_2,\cdots,i_r)}|=t_{2}$
	when  $i_1\neq i_2$.

	Note that there is a bijection between its $\mathcal F'$-colorings and $k$-proper colorings of $\mathcal H-e$. If $v_{1}$ and $v_2$ have 
	the same color $i\in \brk{k}$
	in one coloring of $\mathcal H-e$, then 
	there are precisely $t_1$ $\mathcal F'$-colorings of $\mathcal H-e$ which contain $(v_1,i)$ and $(v_2,i)$ for $i\in \brk{k}$; otherwise 
	there are precisely $t_2$ $\mathcal F'$-colorings of $\mathcal H-e$ which contain $(v_1,i)$ and $(v_2,j)$ for $i\neq j,i,j\in \brk{k}$.
	Since $|\mathcal F_{e}|\leqslant k$, it follows that
	$$P_{DP}(\mathcal H, \mathcal F)\geqslant P(\mathcal H-e,k)-{\rm max}\{kt_1,kt_2\}.$$	
	By Lemma \ref{lem:4-1} $$t_{1}=\frac{1}{k}(P(\mathcal H-e,k)-P(\mathcal H,k))$$
	and
	$$t_{2}=\frac{1}{k^{r-1}(k-1)}(k^{r-2}P(\mathcal H,k)+(1-k^{r-2})P(\mathcal H-e,k)).$$

	A $k$-fold cover $\mathcal F^{*}$ of $\mathcal H$ is constructed as follows.
	If $t_1\geqslant t_2,$ 
	then for each $i\in \brk{k}$
	add $\varphi_i$ with $\varphi(v_j)=i$ for each $j\in [n]$ based on $\mathcal F'$.
	If
	$t_1<t_2$, 
	then add $\varphi_{i}:e\rightarrow \brk{k}$ for each $i\in \brk{k}$ such that $\varphi_{i}(v_{1})=i+1$ and $\varphi_{i}(v_{j})=i$ for $2\le j\le r$.
	
	It is easy to verify that $\mathcal F^{*}$ has the desired property. Therefore the result holds.
\end{proof}

\begin{theorem}\label{the:4-3}
	Suppose $\mathcal H$ is a unicycle linear $r$-uniform hypergraph with $m+p$ edges, and its unique cycle is of 
	length $p$ for $p\ge 3$, $r\ge 3$ and $m\ge 0$. 
	Then, 
	\begin{enumerate}
		\item[(1)]  If $p$ is odd, then 
		$$
		P_{DP}(\mathcal H,k)=P(\mathcal H,k).
		$$
		\item[(2)] 
		 If $p$ is even, then 
		$$P_{DP}(\mathcal H,k)=(k^{r-1}-1)^{m+p}+(-1)^{p+1}(k^{r-1}-1)^{m}.$$
	\end{enumerate} 
\end{theorem}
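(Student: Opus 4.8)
The plan is to fix any edge $e=\{v_i:i\in\brk r\}$ of the unique cycle $C$, with $v_1,v_2$ denoting its two vertices that also lie on other edges of $C$, and then to pin down $P_{DP}(\mathcal H,k)$ exactly via Lemma \ref{lem:4-2}. The starting point is that $\mathcal H-e$ contains no cycle, since $C$ is the only cycle of $\mathcal H$ and $e\in C$; hence by Lemma \ref{lem:3-3} every $k$-fold cover of $\mathcal H-e$ is in bijection with the natural cover $\iota_{\mathcal H-e}$, and therefore its $\mathcal F'$-colorings are in bijection with its proper $k$-colorings, where $\mathcal F'=\mathcal F\setminus\mathcal F_e$. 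Consequently the hypothesis of Lemma \ref{lem:4-2} is satisfied by \emph{every} $k$-fold cover $\mathcal F$ of $\mathcal H$. Since the inequality in Lemma \ref{lem:4-2} then holds for all $\mathcal F$ while the cover $\mathcal F^{*}$ attains it, I obtain
\[
P_{DP}(\mathcal H,k)=\min\left\{P(\mathcal H,k),\ \frac{(k^{r-1}-1)P(\mathcal H-e,k)-k^{r-2}P(\mathcal H,k)}{k^{r-2}(k-1)}\right\}.
\]

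The crucial step, which I expect to be the main obstacle, is to evaluate $P(\mathcal H-e,k)$ correctly. Although $\mathcal H-e$ is linear and acyclic, it is \emph{not} a single hypertree: deleting $e$ detaches the $r-2$ vertices of $e$ other than $v_1,v_2$ from the main body, so $\mathcal H-e$ is a hyperforest. I would count its components $c$ by a vertex--edge argument: a connected unicyclic linear $r$-uniform hypergraph with $m+p$ edges has exactly $(m+p)(r-1)$ vertices (each edge along the elementary $p$-cycle and the attached hypertrees contributing $r-1$ new vertices), while a hyperforest on the same vertex set with $m+p-1$ edges and $c$ components satisfies $(m+p)(r-1)=(m+p-1)(r-1)+c$, forcing $c=r-1$. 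As each component is a hypertree, Lemma \ref{lem:2-1} (applied componentwise) gives
\[
P(\mathcal H-e,k)=k^{r-1}(k^{r-1}-1)^{m+p-1}.
\]

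It then remains to substitute this together with $P(\mathcal H,k)=(k^{r-1}-1)^{m+p}+(-1)^{p}(k-1)(k^{r-1}-1)^{m}$ from Lemma \ref{lem:2-3}. Writing $a=k^{r-1}-1$ and using $k^{r-1}-k^{r-2}=k^{r-2}(k-1)$, the numerator of the second entry factors as $k^{r-2}(k-1)\bigl(a^{m+p}-(-1)^{p}a^{m}\bigr)$, so that entry collapses to $a^{m+p}-(-1)^{p}a^{m}$. Comparing the two entries of the minimum, their difference is
\[
P(\mathcal H,k)-\bigl(a^{m+p}-(-1)^{p}a^{m}\bigr)=(-1)^{p}k\,a^{m}.
\]
For $p$ odd this difference is negative, so the minimum is $P(\mathcal H,k)$, which is statement (1); for $p$ even it is positive, so the minimum is $a^{m+p}-a^{m}=(k^{r-1}-1)^{m+p}+(-1)^{p+1}(k^{r-1}-1)^{m}$, which is statement (2). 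Beyond the component count, the only point needing care is the uniform verification of the Lemma \ref{lem:4-2} hypothesis across all covers, and that is exactly what the acyclicity of $\mathcal H-e$ provides.
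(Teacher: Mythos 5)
Your proposal follows the paper's own route step for step: delete an edge $e$ of the unique cycle, use the acyclicity of $\mathcal H-e$ together with Lemma \ref{lem:3-3} to validate the hypothesis of Lemma \ref{lem:4-2}, and then evaluate the two entries of the resulting minimum via Lemmas \ref{lem:2-1} and \ref{lem:2-3}; your simplified second entry $a^{m+p}-(-1)^p a^m$ (with $a=k^{r-1}-1$) and the difference $(-1)^p k a^m$ agree exactly with the paper's comparison of its quantities $A$ and $B$. In one respect you are more careful than the paper: the paper asserts that $\mathcal H-e_1$ ``is a hypertree,'' whereas, since edge deletion keeps the vertex set, it is a hyperforest with $r-1$ components, and your count $c=r-1$ together with
\begin{equation*}
P(\mathcal H-e,k)=k^{r-1}(k^{r-1}-1)^{m+p-1}
\end{equation*}
is precisely the value that the paper's displayed formulas for $A$ and $B$ implicitly rely on; you have supplied a justification the paper omits. (Your vertex count $n=(r-1)(m+p)$ can alternatively be read off as the degree of the polynomial in Lemma \ref{lem:2-3}, which avoids any assumption that the cycle is elementary.)

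There is, however, one genuine imprecision: your claim that the hypothesis of Lemma \ref{lem:4-2} ``is satisfied by every $k$-fold cover $\mathcal F$ of $\mathcal H$'' is false as stated. Lemma \ref{lem:3-3} applies only to covers of the form $\{\varphi_i^{(j)}: i\in\brk{k},\, j\in\brk{m}\}$, i.e.\ with exactly $k$ partial maps per edge; a general $k$-fold cover only satisfies $|\mathcal F_{e_j}|\le k$, and if some edge carries fewer than $k$ maps then the $\mathcal F'$-colorings of $\mathcal H-e$ outnumber its proper $k$-colorings, so no bijection exists and the lower bound of Lemma \ref{lem:4-2} is not directly available for such $\mathcal F$. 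The paper closes exactly this hole with one sentence at the start of its proof: adding partial maps to a cover can only decrease the number of colorings, so the minimum defining $P_{DP}(\mathcal H,k)$ may be computed over full covers, and one may assume $\mathcal F=\{\varphi_i^{(j)}: i\in\brk{k},\, j\in\brk{m+p}\}$. You need the same reduction (any cover extends to a full one, since for a deficient edge one can assign each of its vertices a color unused at that vertex in the existing maps with that domain). With that one sentence inserted, your proof is complete and coincides with the paper's.
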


\begin{proof}
Without loss of generality,
 suppose that 
 $E(\mathcal H)=
 \{e_j: j\in\brk{m+p}\}$ and $C$ is the only cycle in $\mathcal H$, and 
 $e=\{v_i: i\in\brk{r}\}$ is an edge in $C$,
 where $v_1$ and $v_2$ are the two
 vertices in $e$ which also appear
 in other edges of $C$. 
  Since adding $\varphi_p^{(j)}$ to $\mathcal F$ can only make the number of $\mathcal F$-colorings of $\mathcal H$ smaller, suppose that 
   $\mathcal F=\{\varphi_i^{(j)}: i\in \brk{k}, j\in \brk{m}\}$ is 
  a $k$-fold cover of $\mathcal H$ with $k\ge 2$. 
   Then $\mathcal H-e_1$ is a hypertree.
	By Lemma \ref{lem:3-3} and Lemma \ref{lem:4-2},
	\begin{equation}	\label{eq:4-3}
		P_{DP}(\mathcal H, \mathcal F)\geqslant	
		P(\mathcal H-e_1,k)-{\rm max}\{A,B\}
	\end{equation}
	where $A=P(\mathcal H-e_1,k)-P(\mathcal H,k)$ and $ B=\dfrac{k^{r-1}P(\mathcal H,k)+(k-k^{r-1})P(\mathcal H-e_1,k)}{k^{r-1}(k-1)}.$
	
	By Lemmas \ref{lem:2-1} and \ref{lem:2-3}, 
	$$A=(k^{r-1}-1)^{m+p-1}+(-1)^{p+1}(k-1)(k^{r-1}-1)^m$$
	and
	$$B=(k^{r-1}-1)^{m+p-1}+(-1)^{p}(k^{r-1}-1)^m.$$
	
	If $p$ is odd, then $A> B$. By applying the inequality \eqref{eq:4-3}, 
	$$P_{DP}(\mathcal H,\mathcal F)\ge P(\mathcal H,k).$$
	By the inequality \eqref{eq:3-1},
	$P_{DP}(\mathcal H,k)= P(\mathcal H,k).$
	If $p$ is even, then $A< B$. By applying the inequality \eqref{eq:4-3}, 
	$$P_{DP}(\mathcal H,\mathcal F)\ge (k^{r-1}-1)^{m+p}+(-1)^{p+1}(k^{r-1}-1)^{m}.$$
	By Lemma \ref{lem:4-2}, there is $\mathcal F^*$ such that 
	$$P_{DP}(\mathcal H,\mathcal F^*)=(k^{r-1}-1)^{m+p}+(-1)^{p+1}(k^{r-1}-1)^{m}.$$
	Therefore, 
	$$P_{DP}(\mathcal H,k)=(k^{r-1}-1)^{m+p}+(-1)^{p+1}(k^{r-1}-1)^{m}.$$
\end{proof}

\begin{lemma} \label{lem:4-4}
	Let	$\mathcal T_m^{r}$ is a $r$-uniform hypertree with $m$ edges for $r\ge 2$. If $m\geqslant 0$, then
	$$P_{DP}(\mathcal T_{m}^{r},k)=P(\mathcal T_{m}^{r},k).$$ 
\end{lemma}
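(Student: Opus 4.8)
The plan is to reduce the statement to Lemma~\ref{lem:3-3} by a monotonicity argument, rather than to recompute anything. By definition a hypertree is a connected linear hypergraph with no cycles, so $\mathcal{T}_m^r$ is cycle-free and Lemma~\ref{lem:3-3} applies to every \emph{full} $k$-fold cover of it, meaning one carrying exactly $k$ partial maps on each edge. Since the inequality~\eqref{eq:3-1} already gives $P_{DP}(\mathcal{T}_m^r,k)\le P(\mathcal{T}_m^r,k)$, it suffices to establish the reverse bound $P_{DP}(\mathcal{T}_m^r,\mathcal{F})\ge P(\mathcal{T}_m^r,k)$ for an arbitrary $k$-fold cover $\mathcal{F}$. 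The case $k=1$ and the base case $m=0$ (where $\mathcal{T}_0^r$ is a single vertex) are immediate, so I would assume $m\ge 1$ and $k\ge 2$.

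First I would \emph{complete} $\mathcal{F}$ to a full cover $\hat{\mathcal{F}}\supseteq\mathcal{F}$ with $|\hat{\mathcal{F}}_e|=k$ for every edge $e$. On a fixed edge $e$ the maps of $\mathcal{F}_e$ pairwise disagree at every vertex of $e$, so at each $v\in e$ they occupy exactly $|\mathcal{F}_e|$ distinct colours of $\brk{k}$; one may then distribute the remaining $k-|\mathcal{F}_e|$ colours at each $v\in e$ by an arbitrary bijection, chosen independently for each vertex, to obtain the $k-|\mathcal{F}_e|$ new maps. The key check is that these new maps disagree everywhere both with the old maps of $\mathcal{F}_e$ and with one another, which holds because at every $v\in e$ the new values are precisely the colours not used by $\mathcal{F}_e$, assigned injectively; hence $\hat{\mathcal{F}}$ is again a valid $k$-fold cover.

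Next, because $\hat{\mathcal{F}}\supseteq\mathcal{F}$, every $\hat{\mathcal{F}}$-coloring avoids all maps of $\mathcal{F}$ and so is an $\mathcal{F}$-coloring; this monotonicity yields $P_{DP}(\mathcal{T}_m^r,\mathcal{F})\ge P_{DP}(\mathcal{T}_m^r,\hat{\mathcal{F}})$. Now $\hat{\mathcal{F}}$ has exactly the shape $\{\varphi_i^{(j)}:i\in\brk{k},\,j\in\brk{m}\}$ required in Lemma~\ref{lem:3-3}, which supplies a vertex-wise recolouring identifying $\hat{\mathcal{F}}$ with the natural cover $\iota_{\mathcal{T}_m^r}$; this recolouring carries $\hat{\mathcal{F}}$-colorings bijectively onto $\iota_{\mathcal{T}_m^r}$-colorings, and the latter are in bijection with the proper $k$-colorings as noted just after the definition of the natural cover. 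Therefore $P_{DP}(\mathcal{T}_m^r,\hat{\mathcal{F}})=P(\mathcal{T}_m^r,k)$, and combining the two relations gives $P_{DP}(\mathcal{T}_m^r,\mathcal{F})\ge P(\mathcal{T}_m^r,k)$ for every cover $\mathcal{F}$. Hence $P_{DP}(\mathcal{T}_m^r,k)\ge P(\mathcal{T}_m^r,k)$, and with~\eqref{eq:3-1} equality follows.

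I expect the only delicate points to be the completion construction---specifically verifying that the enlarged family still satisfies the pairwise-disjointness condition of a $k$-fold cover---and the observation that the cover-level bijection of Lemma~\ref{lem:3-3} transports $\mathcal{F}$-colorings to $\mathcal{F}$-colorings bijectively; everything else is bookkeeping, and the answer is consistent with the general upper bound, since a hypertree has $n=1+m(r-1)$ vertices and $k^{\,n}(k^{r-1}-1)^m/k^{(r-1)m}=k(k^{r-1}-1)^m=P(\mathcal{T}_m^r,k)$.
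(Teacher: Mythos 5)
Your proof is correct, but it takes a genuinely different and more direct route than the paper's. The paper fixes an edge $e$, applies Lemma~\ref{lem:3-3} to the cycle-free hypergraph $\mathcal T_m^{r}-e$ in order to invoke the edge-deletion inequality of Lemma~\ref{lem:3-2} (which in turn rests on the counting in Lemma~\ref{lem:3-1}), and then uses Dohmen's formula (Lemma~\ref{lem:2-1}) to compute the two competing quantities $P(\mathcal T_m^{r}-e,k)-P(\mathcal T_m^{r},k)$ and $P(\mathcal T_m^{r},k)/(k^{r-1}-1)$, finding both equal to $(k^{r-1}-1)^{m-1}$, whence the lower bound. You instead apply Lemma~\ref{lem:3-3} to the \emph{whole} hypertree: after completing an arbitrary cover $\mathcal F$ to a full one $\hat{\mathcal F}$ and using the monotonicity that enlarging a cover cannot increase the number of colorings, the lemma identifies $\hat{\mathcal F}$ with the natural cover $\iota_{\mathcal T_m^{r}}(k)$, so every full cover admits exactly $P(\mathcal T_m^{r},k)$ colorings and no numerical computation is needed. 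This buys two things: it dispenses with Lemmas~\ref{lem:3-1}, \ref{lem:3-2} and~\ref{lem:2-1} entirely, and it actually repairs a gap the paper leaves in this proof --- Lemma~\ref{lem:3-3} as stated applies only to covers with $|\mathcal F_e|=k$ on each edge, yet the paper's proof of Lemma~\ref{lem:4-4} invokes it for an arbitrary cover without the completion step (the analogous ``adding maps only decreases the count'' remark appears in the paper only in the proof of Theorem~\ref{the:4-3}); your explicit check that the completed family remains a cover, because at each vertex of $e$ the maps of $\mathcal F_e$ occupy distinct colors and the missing colors can be matched up injectively, is precisely the missing detail. The one caveat, which you flag yourself, is that Lemma~\ref{lem:3-3} literally asserts only a bijection between the families $\mathcal F$ and $\iota_{\mathcal H}(k)$; your argument needs the stronger (and clearly intended) reading that the bijection is induced by vertex-wise recolorings and therefore carries $\hat{\mathcal F}$-colorings bijectively onto $\iota$-colorings --- but this is the same reading the paper itself relies on when it deduces the bijection hypothesis of Lemma~\ref{lem:3-2} from Lemma~\ref{lem:3-3}, so your use of it is legitimate.
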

\begin{proof}
	It is obvious for $m=0,1$. Suppose that $\mathcal F$ is any $k$-fold cover of $\mathcal T_m^{r}$ and suppose that $e\in E({\cal T}_m^{r})$. 
	Lemmas \ref{lem:3-2}-\ref{lem:3-3}
	imply that
	
	$$P_{DP}(\mathcal T_{m}^{r},{\cal F})\geqslant P(\mathcal T_{m}^{r}-e,k)-\max\{P(\mathcal T_{m}^{r}-e,k)-P(\mathcal T_{m}^{r},k),\frac{P(\mathcal T_{m}^{r},k)}{k^{r-1}-1}\}.$$
	
	By Lemma \ref{lem:2-1},
	
	$$P(\mathcal T_{m}^{r}-e,k)-P(\mathcal T_{m}^{r},k)=k^{r-1}(k^{r-1}-1)^{m-1}-(k^{r-1}-1)^{m}=(k^{r-1}-1)^{m-1}$$
	and	
	$$\frac{P(\mathcal T_{m}^{r},k)}{k^{r-1}-1}=(k^{r-1}-1)^{m-1}.$$
	
	So $$P_{DP}(\mathcal T_{m}^{r},F)\geqslant P(\mathcal T_{m}^{r},k).$$
	Because $$P_{DP}(\mathcal T_{m}^{r},k)\leqslant P(\mathcal T_{m}^{r},k),$$
	the result is deduced. 
\end{proof}
The case of $r=2$ in the following result implies Corollary $4$ in \cite{KM21}.
\begin{theorem}
	Let	$\mathcal H$ is a connected $r$-uniform hypergraph with $n$ vertices and $m$ edges for $r\ge 2$. 
	Then, for any $k\in \N$,
	\equ{eq4}
	{
	P_{DP}(\mathcal H,k)= \dfrac{k^{n}(k^{r-1}-1)^{m}}
	{k^{(r-1)m}}
}
	if and only if $\mathcal H$ is a hypertree.
\end{theorem}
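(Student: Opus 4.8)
The plan is to prove the two implications separately; the forward (sufficiency) direction is routine, while the converse carries the real content and I would obtain it through an averaging principle built on the proof of Proposition~1.

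\medskip
\noindent\emph{Sufficiency.} An $r$-uniform hypertree with $m$ edges has exactly $n=1+(r-1)m$ vertices, since each edge beyond the first contributes $r-1$ new vertices. Substituting $n=1+(r-1)m$ into the right-hand side of \eqref{eq4} gives $k^{\,n-(r-1)m}(k^{r-1}-1)^m=k(k^{r-1}-1)^m$, which by Lemma~\ref{lem:2-1} equals $P(\mathcal H,k)$. Since Lemma~\ref{lem:4-4} yields $P_{DP}(\mathcal H,k)=P(\mathcal H,k)$, equality in \eqref{eq4} holds for every $k$.

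\medskip
\noindent\emph{Converse (via averaging).} Write $U(k)=\dfrac{k^{n}(k^{r-1}-1)^{m}}{k^{(r-1)m}}$ for the right-hand side of \eqref{eq4}. First note that the minimum defining $P_{DP}(\mathcal H,k)$ is attained at a \emph{full} cover (one with $|\mathcal F_e|=k$ for every edge $e$): any cover can be completed to a full one by adjoining pairwise-disjoint partial maps, and each adjunction only forbids more colorings. Equip the full covers with the random model of Proposition~1 (for each edge $e$ and vertex $u\in e$ an independent uniform permutation of $\brk{k}$); that computation shows that the number $Y$ of $\mathcal F$-colorings, as a random variable over the random full cover $\mathcal F$, satisfies $\mathbb E[Y]=U(k)$. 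The natural cover $\iota_{\mathcal H}(k)$ is itself a full cover with $P_{DP}(\mathcal H,\iota_{\mathcal H}(k))=P(\mathcal H,k)$, so whenever $P(\mathcal H,k)\neq U(k)$ the variable $Y$ is not almost surely constant. Since $P_{DP}(\mathcal H,k)=\min_{\mathcal F}P_{DP}(\mathcal H,\mathcal F)\le\mathbb E[Y]=U(k)$, and equality would force $Y\equiv U(k)$, I conclude
\[
P(\mathcal H,k)\neq U(k)\ \Longrightarrow\ P_{DP}(\mathcal H,k)<U(k).
\]
Thus the converse reduces entirely to showing that a connected non-hypertree satisfies $P(\mathcal H,k)\neq U(k)$ for some (indeed, all large) $k$.

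\medskip
\noindent\emph{Comparing the two polynomials.} By inclusion--exclusion over monochromatic edges, and by the binomial expansion respectively,
\[
P(\mathcal H,k)=\sum_{S\subseteq E}(-1)^{|S|}k^{c(S)},\qquad U(k)=\sum_{S\subseteq E}(-1)^{|S|}k^{\,n-(r-1)|S|},
\]
where $c(S)$ is the number of connected components of $(X,S)$. Because an $r$-edge merges at most $r$ components, $c(S)\ge n-(r-1)|S|$, with equality exactly when $S$ is a hyperforest; hence $U(k)$ is precisely the ``hyperforest part'' of $P(\mathcal H,k)$ and
\[
P(\mathcal H,k)-U(k)=\sum_{S\ \text{not a hyperforest}}(-1)^{|S|}\bigl(k^{c(S)}-k^{\,n-(r-1)|S|}\bigr).
\]
As $\mathcal H$ is connected but not a hypertree it is not a hyperforest, so at least one summand is nonzero. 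The main obstacle is to rule out total cancellation, and I would do this by splitting on the excess $\beta=(r-1)m-n+1\ge1$. When $\beta\ge2$ one has $n<(r-1)m$, so $U(k)=(k^{r-1}-1)^m/k^{(r-1)m-n}$ is non-integral for every $k\ge2$ (as $\gcd(k^{r-1}-1,k)=1$), while the integer $P_{DP}(\mathcal H,k)\le U(k)$ then forces $P_{DP}(\mathcal H,k)<U(k)$ outright, bypassing the averaging step. When $\beta=1$ the hypergraph has a unique circuit, and a direct evaluation of $P(\mathcal H,k)-U(k)$ (via Lemma~\ref{lem:2-3} and Theorem~\ref{the:4-3} when the circuit is an elementary cycle, or by inclusion--exclusion on the two overlapping edges otherwise) exhibits it as a nonzero expression that does not vanish for $k\ge2$. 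In either case $P(\mathcal H,k)\neq U(k)$, and combined with the averaging implication above this completes the converse.
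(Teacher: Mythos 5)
Your proof is correct in outline, and while it shares the paper's skeleton at two points --- the sufficiency direction (Lemma~\ref{lem:2-1} plus Lemma~\ref{lem:4-4}, with $n=(r-1)m+1$) and the non-integrality trick when $n\le (r-1)m-1$ (the paper likewise combines $P_{DP}(\mathcal H,k)\le U(k)$, from its probabilistic Proposition, with the fact that $U(k)=(k^{r-1}-1)^m/k^{(r-1)m-n}$ is not an integer for $k\ge 2$) --- your treatment of the boundary case $n=(r-1)m$ is genuinely different and in fact more careful than the paper's. The paper disposes of that case in one line by citing Theorem~\ref{the:4-3}, which silently assumes $\mathcal H$ is a unicyclic \emph{linear} hypergraph; but a connected $r$-uniform hypergraph with $n=(r-1)m$ may instead contain two edges meeting in exactly two vertices (not linear, and under the paper's definition containing no cycle at all, since cycles there have length at least $3$), a configuration Theorem~\ref{the:4-3} does not cover. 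Your averaging principle is sound: the random cover of Proposition~1 has mean $U(k)$, the natural cover lies in its support with value $P(\mathcal H,k)$, and if all mass off $U(k)$ lay above the mean the expectation would exceed $U(k)$; hence $P(\mathcal H,k)\ne U(k)$ yields a realization with fewer than $U(k)$ colorings and so $P_{DP}(\mathcal H,k)<U(k)$. This reduces strictness to the purely enumerative statement $P(\mathcal H,k)\ne U(k)$, which your inclusion--exclusion check verifies in the non-linear subcase (for the core of two $r$-edges sharing two vertices one gets $P-U=(k-1)(k^{r-1}-1)^{m-2}$ after propagating through pendant edges) as well as in the elementary-cycle subcase via Lemma~\ref{lem:2-3}. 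What the paper's route buys is brevity and an exact value from Theorem~\ref{the:4-3}; what yours buys is coverage of the non-linear subcase the paper overlooks, independence from the parity analysis, and an argument that works uniformly for $r=2$ without invoking a theorem stated only for $r\ge 3$.

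Two small points to tighten. First, your assertion that $\beta=1$ forces a unique circuit that is either an elementary cycle or two edges sharing exactly two vertices needs justification; the clean way is to observe that $\beta=(r-1)m-n+1$ is the cycle rank of the bipartite incidence graph, which is unicyclic when $\beta=1$, and its unique (even) cycle has length $4$ (two edges meeting in two vertices) or at least $6$ (an elementary hypergraph cycle), after which pendant hyperedges each add exactly $r-1$ new vertices. Second, the Whitney-type expansion paragraph, while correct ($c(S)\ge n-(r-1)|S|$ with equality exactly for hyperforests), is never used in your final argument --- the $\beta$-split does all the work --- and could simply be dropped.
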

\begin{proof}
If the hypergraph is a hypertree, then
(\ref{eq4}) holds 
by Lemma \ref{lem:4-4}. Conversely, let suppose that $\mathcal H$ is a connected $r$-uniform hypergraph with $n$ vertices and $m$ edges such that
(\ref{eq4}) holds.

Let $A=\dfrac{k^{n}(k^{r-1}-1)^{m}}{k^{(r-1)m}}$.
If $n=(r-1)m$, then 
$A=(k^{r-1}-1)^{m}$. By 
Theorem \ref{the:4-3}, 
$$
P_{DP}(\mathcal H,k)<(k^{r-1}-1)^{m}.
$$ 
If $n\le (r-1)m-1$, then $A\le \dfrac{1}{k}(k^{r-1}-1)^{m}.$

	Because $\dfrac{1}{k}(k^{r-1}-1)^{m}$ isn't an integer for $k\ge 2$,
$$
P_{DP}(\mathcal H,k)<A.
$$
If $n=(r-1)m+1$, then $\mathcal H$ is a $r$-uniform hypertree.
By the given conditions on $\hyh$,
we have $n\le (r-1)m+1$. 
Therefore the conclusion holds. 
\end{proof}

\section{Further study}

In this section, some problems are proposed whose solutions could make progress on the similar problems in \cite{KM21,Th09}.

\begin{problem}{\rm For which connected hypergraphs $\mathcal H$ does $P_{DP} (\mathcal H, k) = P(\mathcal H, k)$ for every $k \ge 2$?}
\end{problem}
Kaul and Mudrock proved that the answer is affirmative for a tree, a unicycle graph of an odd cycle, chordal and $C_{2p+1}\oplus C_{2q+1}$ \cite{KM20}. 
In the article, we show that the answer is affirmative for a $r$-uniform hypertree and a connected unicycle linear $r$-uniform hypergraph with an odd cycle.

\begin{problem}{\rm
	For a hypergraph $\mathcal H$ with $P_{DP}(\mathcal H, k_0) = P(\mathcal H, k_0)$ for some $k_0\ge \chi(\mathcal H)$, is it true for all $m\ge m_0$ that
	$$P_{DP}(\mathcal H, k) = P(\mathcal H, k)?$$} 
\end{problem}

Kaul and Mudrock provided the affirmative answer of the following problem for a graph \cite{KM20}. 

\begin{problem}
	{\rm For every hypergraph $\mathcal H$, does there exist $p, N\in \mathbb N$ such that $P_{DP} (\mathcal H\vee K_p, k) =P(\mathcal H \vee K_p, k)$ whenever $k\ge N$?}
\end{problem}

\begin{problem}{\rm
Let $\chi_{DP}(\mathcal H)$ denote the smallest positive integer $k$ such that $\cal H$ admits a $\cal F$-coloring for every $k$-fold cover $\cal F$ of a hypergraph $\cal H$. Is it true that if 
$k > \chi(\mathcal H)$,
then
$P_{DP}(\mathcal H,k) > 1$?}
\end{problem}
Thomassen asked if there exists a graph $G$ and 
a $k > 2$ such that $P_l(G, k) = 1$. Clearly, one could make progress on this question by 
showing $P_{DP} (G, m) > 1$ for certain $G$ and $m\in \mathbb N$.

\begin{problem} {\rm For which connected hypergraphs $\mathcal H$ does $P_{DP} (\mathcal H, k) < P(\mathcal H, k)$ for some $k\in \mathbb{N}${\rm ?} Let $P_{DP} (\mathcal H, k) \approx P(\mathcal H, k)$ denote a hypergraph with $P_{DP} (\mathcal H, k) < P(\mathcal H, k)$ and $P_{DP} (\mathcal H, k) = P(\mathcal H, k)$ for enough $k$. Moreover, for which connected hypergraphs $\mathcal H$ does $P_{DP} (\mathcal H, k) \approx P(\mathcal H, k)$?}
\end{problem}
Readers are referred to several results of graphs for the problem above in \cite{DY22, KM21, MT21}.

\begin{problem}
{\rm	For any hypergraph $\mathcal H$ with $n$ vertices, if $P_{DP}(\mathcal H,k)<P(\mathcal H,k)$, is there a constant $\lambda$ such that  for sufficiently large $k$ 
	$$
	P(\mathcal H, k)-P_{DP} (\mathcal H, k) \le\lambda k^{n-2}?
	$$}
\end{problem}

\vskip 5mm
\noindent{\bf Acknowledgements }
\vskip 5mm

This work is supported by National Natural Science Foundation of
China under Grant
No. 12371340.

\end{document}